\documentclass[12pt]{amsart}

\topmargin  -5mm
\evensidemargin 8mm
\oddsidemargin  8mm
\textwidth  158mm
\textheight 218mm
\parskip 4pt
\usepackage{cases}
\usepackage[english]{babel}
\usepackage{times,bm,amsfonts,amsmath,amssymb,dsfont} 
\usepackage{graphicx} 
\usepackage{fig4tex}
\usepackage[babel=true]{csquotes}
\usepackage{mathabx}
\usepackage{pdfsync}

\makeatletter
\def\section{\@startsection{section}{1}\z@{.9\linespacing\@plus\linespacing}%
  {.7\linespacing} {\fontsize{13}{15}\selectfont\scshape\centering}}
\def\paragraph{\@startsection{paragraph}{4}%
  \z@{0.3em}{-.5em}%
  {$\bullet$ \ \normalfont\itshape}}
\makeatother

\newtheorem{theo}{Theorem}[section]
\newtheorem{prop}[theo]{Proposition}
\newtheorem{lem}[theo]{Lemma}
\newtheorem{cor}[theo]{Corollary}

\theoremstyle{definition}

\theoremstyle{remark}
\newtheorem{rem}[theo]{Remark}
\newcommand\got[1]{{\bm{\mathfrak{#1}}}}
\makeatletter

\@addtoreset{equation}{section}
\makeatother

\usepackage{color}

\definecolor{gr}{rgb}   {0.,   0.69,   0.23 }
\definecolor{bl}{rgb}   {0.,   0.5,   1. }
\definecolor{mg}{rgb}   {0.85,  0.,    0.85}
\definecolor{yl}{rgb}   {0.8,  0.7,   0.}

\definecolor{webred}{rgb}{0.75,0,0}
\definecolor{webgreen}{rgb}{0,0.75,0}
\usepackage[citecolor=webgreen,colorlinks=true,linkcolor=webred]{hyperref}

\renewcommand{\d}{\, {\rm d}}

\newcommand{\R}{\mathbb{R}}

\newcommand{\Rp}{\R_{+}}
\newcommand{\Hplow}{\rm{Hp}^{\rm low}}
\newcommand{\Hslow}{\rm{Hs}^{\rm low}}
\newcommand{\Hpup}{\rm{Hp}^{\rm up}}

\newcommand{\spectre}{\lambda}

\newcommand{\spec}{\got{S}}
\newcommand{\specess}{\got{S}_{\mathrm{ess}}}

\newcommand{\dom}{\operatorname{Dom}}
\newcommand{\supp}{\operatorname{Supp}}

\renewcommand{\Im}{\operatorname{Im}}

\newcommand{\curl}{\operatorname{curl}}
\newcommand{\dist}{\operatorname{dist}}

\newcommand{\sinc}{\operatorname{sinc}}

\newcommand{\Add}{\underline{{\bf A}}}
\newcommand{\B}{\underline{{\bf B}}}

\newcommand{\dg}{\got{h}^{ \rm N}}

\newcommand{\mudg}{\mu^{\rm N}}

\newcommand{\Secteur}{\mathcal{S}}

\newcommand\Wedge{\mathcal{W}}
\newcommand{\sd}{\lambda}
\newcommand{\sse}{\underline{s}}
\newcommand{\ssess}{\underline{s}_{\, \rm ess}}
\newcommand{\FQ}{\mathcal{Q}_{\tau}}
\newcommand{\FQpol}{\mathcal{Q}^{\rm pol}}

\newcommand{\Vpol}{V^{\rm pol}}

\title[Magnetic Schr\"odinger operator with tangent magnetic field]{\large The Schr\"odinger operator on an infinite wedge with a tangent magnetic field. }
\author{Nicolas Popoff}
\address{Laboratoire IRMAR, UMR 6625 du CNRS, Campus de Beaulieu, 35042 Rennes cedex, France
\\
 {\it E-mail adress:} nicolas.popoff@univ-rennes1.fr}

\date{\today}

\begin{document}
\begin{abstract}
We study a model Schr\"odinger operator with constant magnetic field on an infinite wedge with Neumann boundary condition. The magnetic field is assumed to be tangent to a face. We compare the bottom of the spectrum to the model spectral quantities coming from the regular case. We are particularly motivated by the influence of the magnetic field and the opening angle of the wedge on the spectrum of the model operator and we exhibit cases where the bottom of the spectrum is smaller than in the regular case. Numerical computations enlighten the theoretical approach.
\end{abstract} 
\maketitle

\section{Introduction}

\subsection{The magnetic Laplacian on model domains}
\paragraph{Motivation}
Let $(-ih\nabla-{\bf A})^2$ be the Schr\"odinger magnetic operator (also called the magnetic Laplacian) on an open simply connected subset $\Omega$ of $\R^3$.
 The magnetic potential ${\bf A}:\R^3\mapsto \R^3 $ satisfies $\curl {\bf A}={\bf B}$ where ${\bf B}$ is the magnetic field and $h$ is a semi-classical parameter. For a reasonable domain $\Omega$, the Neumann realization of $(-ih\nabla-{\bf A})^2$ is an essentially self-adjoint operator with compact resolvent. The motivation for the study of this operator comes from the theory of superconductivity, indeed the linearization of the Ginzburg-Landau functional brings the study of the Neumann magnetic Laplacian (see \cite{GiPh99}). For a magnetic field of strong intensity, the superconductivity phenomenon is destroyed. We denote by $\spectre({\bf B};\Omega,h)$ the first eigenvalue of $(-ih\nabla-{\bf A})^2$. The behavior of the critical value of the magnetic field for which the superconductivity disappears is linked to $\spectre({\bf B};\Omega,h)$ when $h$ goes to 0 (see \cite[Proposition 1.9]{FoHe3} for example). 
  
  A common interest is to understand the influence of the combined geometries of the domain $\Omega$ and the magnetic field ${\bf B}$ on the asymptotics of $\spectre({\bf B};\Omega,h)$ in the semi-classical limit $h\to0$.

\paragraph{Link between the semi-classical problem and model operators}
 In order to find the main term of the asymptotics of $\spectre({\bf B};\Omega,h)$, we are led to study the magnetic Laplacian without semi-classical parameter ($h=1$) on unbounded ``model" domains invariant by dilatation with a constant magnetic field. More precisely to each point $x\in \overline{\Omega}$ we associate its tangent cone $\Pi_{x}$
and we denote by
$$ P_{{\bf A}_{x}, \, \Pi_{x}}=(-i\nabla-{\bf A}_{x})^2$$
the Neumann realization of the magnetic Laplacian on the model domain $\Pi_{x}$ where ${\bf A}_{x}$ satisfies $\curl {\bf A}_{x}={\bf B}_{x}$ and where ${\bf B}_{x}$ is the constant vector field equal to ${\bf B}(x)$. We denote by
\begin{equation}
\label{D:spectremodel}
  \spectre({\bf B}_{x};\Pi_{x}) \ \ \mbox{ the bottom of the spectrum of} \ \ P_{{\bf A}_{x}, \, \Pi_{x}} \ .
\end{equation}
When the domain belongs to a suitable class of corner domains (see \cite[Chapter 1]{Dau88} for example) and if the magnetic field is regular and does not vanish, one should expect that $\spectre({\bf B};\Omega,h)$ behaves like $h \inf_{x\in\overline{\Omega}}\spectre({\bf B}_{x};\Pi_{x})$ when $h\to0$ \footnote{All the asymptotics known for particular domains have this structure. A work with M. Dauge and V. Bonnaillie-No\"el is in progress to get the behavior of $\spectre({\bf B};\Omega,h)$ at first order for general domains $\Omega$.}. To a constant magnetic field we can associate a linear potential and due to a scaling we have $\spectre({\bf B}_{x};\Pi_{x})=\|{\bf B}_{x}\|\spectre\left(\frac{{\bf B}_{x}}{\|{\bf B}_{x} \|};\Pi_{x}\right)$. Therefore when we will deal with the magnetic Laplacian on model domains, we will always suppose that the magnetic field is constant an unitary.

\paragraph{Regular case}
 When $\Omega$ is a 3D-domain with regular boundary, we only need to study the magnetic Laplacian on a space and on half-spaces for different orientations of the magnetic field. The bottom of the spectrum of the associated operators is minimal when $\Pi$ is a half-space and ${\bf B}$ is tangent to the boundary (see \cite{LuPan00} and \cite{HeMo02}). In that case we have $\spectre({\bf B};\Pi)=\Theta_{0}\approx 0.59$ (see \cite{SaGe63} for the first work on $\Theta_{0}$ or Subsection \ref{SS:demiplan} for more details and references). When ${\bf B}$ is constant and $\Omega\subset\R^3$ is regular, the following asymptotics is proved 
  in \cite{LuPan00} (see also \cite{HeMo04} for more terms): 
 \begin{equation}
 \spectre(\Omega;{\bf B},h)\underset{h\to0}{\sim}\Theta_{0}h \ 
 \end{equation}
 
 \paragraph{Singular cases known}
 When $\Omega$ has an edge, it is necessary to introduce a new model operator: the magnetic Laplacian on a infinite wedge. We denote by $\alpha$ the opening angle of the wedge. In \cite{Pan02}, Pan has studied the case of a wedge whose opening angle is $\frac{\pi}{2}$ and has applied its results to study the first eigenvalue of the magnetic Laplacian on a cuboid in the semi-classical limit. He proved that there exist configurations where the bottom of the spectrum of the magnetic Laplacian on a quarter space is smaller than the spectral quantity $\Theta_{0}$ coming from the regular case. Using the Neumann boundary condition and symmetrization, he compared the operators to the model operator on a half-plane. When the opening angle is different from $\frac{\pi}{2}$, we can not use this method anymore. 
 
 Another case already studied is the one of a magnetic field tangent to the axis of the wedge. The operator reduced to a 2D operator on a sector whose spectrum is studied in \cite{Ja01} for the special case $\alpha=\frac{\pi}{2}$ and in \cite{Bon06} for wedges of opening $\alpha\in (0,\pi)$. One of the main result is that for $\alpha\in(0,\frac{\pi}{2}]$, the bottom of the spectrum of this model operator is below $\Theta_{0}$.
 
  In \cite{PopRay12}, the authors deal with the case where $\Omega$ is a lens with a curved edge. The model operator involved is the magnetic Laplacian on an infinite wedge with a magnetic field normal to the plane of symmetry of the wedge. The results from \cite[Chapter 6]{Popoff} show that in that case the bottom of the spectrum of the model operator is always larger than $\Theta_{0}$ and is decreasing with the opening angle of the wedge.
 
   In this article we study the bottom of the spectrum of the magnetic Laplacian on infinite convex wedges in the case where the magnetic field is tangent to a face of the wedge. We compare the bottom of the spectrum to the model spectral quantity $\Theta_{0}$ and we characterize the spectrum of the 2D operator family associated to the magnetic Laplacian on the wedge. We are particularly interested in the influence of the magnetic field orientation and the opening angle of the wedge. Some of our results recover what was done in \cite{Pan02} for the quarter space and give a new approach using the tools of the spectral theory.
  
\subsection{The operator on a wedge}
Let $(x_{1},x_{2},x_{3})$ be the cartesian coordinates of $\R^3$. The infinite sector of opening $\alpha\in (0,\pi)$ is denoted by 
$$\Secteur_{\alpha}:=\{ (x_{1},x_{2})\in \R^2, \, |x_{2}| \leq x_{1}\tan\tfrac{\alpha}{2} \} \  $$
and the  infinite wedge of opening $\alpha$ is 
$$\Wedge_{\alpha}:=\Secteur_{\alpha}\times \R \ . $$
The magnetic field ${\bf B}=(b_{1},b_{2},b_{3})$ is constant and unitary and we denote by $\B:=(b_{1},b_{2})$ its projection on $\R^2$. The spherical coordinates are denoted by $(\gamma,\theta)$ and satisfied $\cos\gamma={\bf B}\cdot(0,0,1)$ and $\cos\theta=\B\cdot(0,1)$. We will assume that the magnetic field ${\bf B}$ is tangent to a face of the edge (see figure \ref{F:diedre}). Due to symmetry we will restrict our study to the case where $\gamma\in [0,\frac{\pi}{2}]$ and $\theta=\frac{\pi-\alpha}{2}$, and therefore the magnetic field writes
\begin{equation}
\label{D:Bcoordpolar}
{\bf B}=(\sin\gamma \cos\tfrac{\alpha}{2},\sin\gamma \sin\tfrac{\alpha}{2},\cos\gamma) \ .
\end{equation}
\begin{figure}[h!]
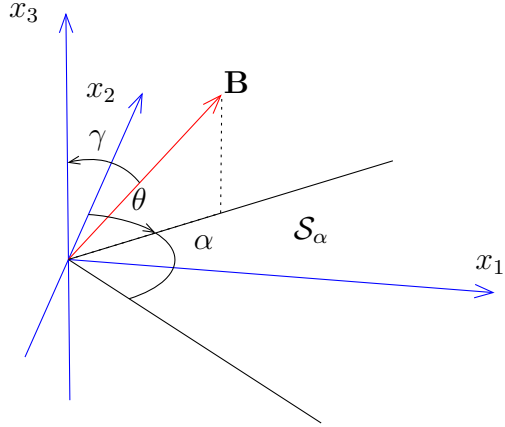

\begin{center}
         \figinit{pt,realistic}
\figset projection(psi=-80)
\def\dist{160}
\def\Ralpha{40}
\figpt 1:(0,0,0)
\figpttraC 2:=1/\dist,0,0/
\figpttraC 3 :=1/0,\dist,0/
\figpttraC 4:=1/0,0,100/
\figpt 5:(0,0,-60)
\figpttraC 6 :=1/110,110,0/
\figpttraC 10 :=1/110,-110,0/
\figgetdist \unite [1,6]
\figvectN 7 [1,4,6]
\figpttra 8 :$x_{2}$= 1 / \unite, 7/
\figpt 12:(0,-80,0)
\figpt 13:(50,50,50)
\figptorthoprojplane 14 :=13/1,4/ 
\figptcirc 9 :$M$:1,6,10;\Ralpha(50)
\figptcirc 15 :$M$:1,3,14;\Ralpha(20)
\figptcirc 16 :$M$:1,13,4;\Ralpha(32)

\psbeginfig{}
\psline[1,6]
\psline[1,10]
\psset(color=\Bluergb)
\psarrow[5,4]
\psarrow[1,2]
\psarrow[12,3]
\psset(color=\Redrgb)
\psarrow[1,13]
\psset(color=\Blackrgb)
\psset arrowhead(length=5)
\psarrowcircP 1;40 [3,14,14]
\psarrowcircP 1;40 [13,4,4]
\psarccircP 1;\Ralpha  [10,6,6]
\psset(dash=5)\psline[1,14,13]
\psendfig
\figvisu{\figBoxA}{}{
\figwriten 2:$x_{1}$ (7)
\figwritew 4:$x_{3}$ (7)
\figwritew 3:$x_{2}$ (7)
\figwritene 9:$\alpha$ (12)
\figwritene 13:${\bf B}$ (1)
\figwritene 15:$\theta$ (5)
\figwriten 16:$\gamma$ (4)
\figwritesw 6:$\Secteur_{\alpha}$ (30)
}
\par\centerline{\box\figBoxA}
\caption{The infinite wedge $\Wedge_{\alpha}$ of opening $\alpha$ and the magnetic field ${\bf B}$ of spherical coordinate $(\gamma,\theta)$.}
\label{F:diedre}
\end{center}
\end{figure}
\\
We assume that the magnetic potential ${\bf A}=(a_{1},a_{2},a_{3})$ satisfies $\curl {\bf A}={\bf B}$ 
and the magnetic Schr\"odinger operator writes: 
$$P_{{\bf A}, \, \Wedge_{\alpha}}=(D_{x_{1}}-a_{1})^2+(D_{x_{2}}-a_{2})^2+(D_{x_{3}}-a_{3})^2 \ .$$
with $D_{x_{j}}=-i\partial_{x_{j}}$. Due to gauge invariance, the spectrum of $P_{{\bf A}, \, \Wedge_{\alpha}}$ does not depend on the choice of ${\bf A}$ as soon as it satisfies $\curl {\bf A}={\bf B}$ and we will denote by ``choice of gauge" the choice of a magnetic potential that satisfies $\curl {\bf A}={\bf B}$. According to \eqref{D:spectremodel} we note:  
$$\sd({\bf B};\Wedge_{\alpha}):=\inf \spec(P_{{\bf A}, \, \Wedge_{\alpha}}) \ , $$
where we denote by $\spec(P)$ the spectrum of an operator $P$. We also denote by $\specess(P)$ the essential spectrum an operator $P$. Due to the invariance by translation in the $x_{3}$-variable, the spectrum of $P_{{\bf A}, \, \Wedge_{\alpha}}$ is absolutely continuous and we have $\spec(P_{{\bf A}, \, \Wedge_{\alpha}})=\specess(P_{{\bf A}, \, \Wedge_{\alpha}})$.

\paragraph{Reduction to a parameter family of operators on the sector}
We take a magnetic potential of the form ${\bf A}(x_{1},x_{2},x_{3})=(\Add(x_{1},x_{2}),x_{2}b_{1}-x_{1}b_{2})$ where the 2D-magnetic potential $\Add$ satisfies $\curl \Add=b_{3}$. An example for the choice of $\Add$ is the ``Landau" potential $\Add^{\rm L}(x_{1},x_{2})=(-x_{2}b_{3},0)$ and the associated operator writes 
$$P_{{\bf A}^{\rm L}, \, \Wedge_{\alpha}}=(D_{x_{1}}+x_{2}b_{3})^2+D_{x_{2}}^2+(D_{x_{3}}-x_{2}b_{1}+x_{1}b_{2})^2 \ .$$
We introduce the reduced electric potential on the sector: 
$$  V_{\B, \, \tau}(x_{1},x_{2}):=(x_{1}b_{2}-x_{2}b_{1}-\tau)^2 \ ,$$
where the Fourier parameter $\tau$ lies in $\R$. Performing a Fourier transform in the $x_{3}$ variable, we get the following direct integral decomposition (see \cite{ReSi78}): 
\begin{equation}
\label{E:decompintdir1}
P_{{\bf A},\, \Wedge_{\alpha}} =\int_{\tau\in\R}^{\bigoplus} P_{\Add,\, \Secteur_{\alpha}}+ V_{\B, \, \tau} \d \tau \ .
\end{equation}
where $ P_{\Add,\, \Secteur_{\alpha}}+ V_{\B, \, \tau}$ is the Neumann realization of $(-i\nabla-\Add)^2+V_{\B, \, \tau}$ on the sector $\Secteur_{\alpha}$.
Let us define
 $$\sse({\bf B};\alpha,\tau):=\inf\spec(P_{\Add,\, \Secteur_{\alpha}}+ V_{\B, \, \tau})$$ and 
$\FQ $ the quadratic form associated to $P_{\Add,\, \Secteur_{\alpha}}+ V_{\B, \, \tau}$. It is elementary that the form domain of $P_{\Add,\, \Secteur_{\alpha}}+ V_{\B, \, \tau}$ is 
$$\dom(\FQ)= \{u\in L^2(\Secteur_{\alpha}), \, (-i\nabla-\Add)u\in L^2(\Secteur_{\alpha}), \, |x_{1}b_{2}-x_{2}b_{1}|u\in L^2(\Secteur_{\alpha}) \} $$
and for $u\in \dom(\FQ)$ the expression of the quadratic form is 
$$ \FQ(u):=\int_{\Secteur_{\alpha}}|(-i\nabla-\Add)u|^2+V_{\B, \, \tau}|u|^2 \d x_{1} \d x_{2} \ . $$
Since the form domain does not depend on $\tau$, from Kato's perturbation theory (see \cite{kato}) the function $\tau\mapsto\sse({\bf B};\alpha,\tau)$ is continuous on $\R$. Thanks to \eqref{E:decompintdir1} we have the fundamental relation, sometimes called the F-principle (see \cite{LuPan00_2}): 
\begin{equation}
\label{E:relationfond}
\sd({\bf B};\Wedge_{\alpha})=\inf_{\tau\in\R} \sse({\bf B};\alpha,\tau)
\end{equation}
Therefore we are reduced to study the spectrum of a 2D-family of Schr\"odinger operators. 
\paragraph{Invariance principles}
We recall the action of isometry on the 2D-magnetic Laplacian: 
\begin{itemize}
\item Translation: let $\Omega\in \R^2$ and ${\bf t}\in \R^2$. Let $\Omega_{{\bf t}}:=\Omega+{\bf t}$ be the domain deduced by translation. Let $\Add$ be a magnetic field such that $\curl \Add$ is a constant denoted by $B$. Then $P_{\Add, \, \Omega}$ and $P_{\Add, \, \Omega_{{\bf t}}}$ are unitary equivalent, moreover $u$ is an eigenfunction for $P_{\Add, \, \Omega}$ if and only if $x \mapsto e^{\frac{iB}{2}x\wedge {\bf t}}u(x-{\bf t})$ is an eigenfunction for $P_{\Add, \, \Omega_{{\bf t}}}$.
\item Rotation: let $\Omega\in \R^2$ and $R_{\omega}$ be the rotation of angle $\omega$. Let $\Omega_{\omega}:=R_{\omega}(\Omega)$ be the domain deduced by rotation. Then $P_{\Add, \, \Omega}$ and $P_{\Add, \, \Omega_{\omega}}$ are unitary equivalent, moreover $u$ is an eigenfunction for $P_{\Add, \, \Omega}$ if and only if $u\circ R_{\omega}^{-1}$ is an eigenfunction for $P_{\Add, \, \Omega_{\omega}}$.
\end{itemize}
\subsection{Problematic}
We study the spectral quantity $\sd({\bf B};\Wedge_{\alpha})$ and the associated ``band function" $\tau\mapsto\sse({\bf B};\alpha,\tau)$. We are particularly interested in the following questions: 
\begin{itemize}
\item Does the band function $\tau \mapsto \sse({\bf B};\alpha,\tau)$ reach its infimum?
\item If it does, is this infimum a discrete eigenvalue for the operator $P_{\Add,\, \Secteur_{\alpha}}+ V_{\B, \, \tau}$?
\item Is it possible to compare $\sd({\bf B};\Wedge{\alpha})$ and $\Theta_{0}$?
\end{itemize}
In \cite{Pan02}, these questions are partially answered for the special case $\alpha=\frac{\pi}{2}$. It is proved that the band function $\tau\mapsto \sse({\bf B};\frac{\pi}{2},\tau)$ always reaches its infimum and that $\sd({\bf B};\Wedge_{\frac{\pi}{2}})<\Theta_{0}$ when the magnetic field is tangent to a face, except when it is normal to the axis of the wedge, and it this case $\sd({\bf B};\Wedge_{\frac{\pi}{2}})=\Theta_{0}$. As said before the proofs are specific to the case $\alpha=\frac{\pi}{2}$ and the general case cannot be deduced using the same arguments.
\subsection{Organization of the paper}
In Section \ref{S:modeloperators} we recall results about model operators and we introduce auxiliary operators linked to the behavior of the operator on the wedge at infinity. In Section \ref{S:specess} we determine the bottom of the essential spectrum of the operator $P_{\Add,\, \Secteur_{\alpha}}+ V_{\B, \, \tau}$ on the sector. In Section \ref{S:Limit} we compute the limit of $\sse({\bf B};\alpha,\tau)$ when $\tau\to-\infty$ and $\tau\to+\infty$. We provide an explicit expression for these limits using the spectral model quantity coming from the problem on the half-plane. In Section \ref{S:ub} we construct quasi-modes for the operator on the sector and we deduce a rough upper bound for $\sd({\bf B};\Wedge_{\alpha})$. In Section \ref{S:champnormal} we study the special case where the magnetic field is tangent to a face and normal to the axis of the wedge. In Section \ref{S:numericalsimu} we present several numerical computations of the first eigenpair of $P_{\Add,\, \Secteur_{\alpha}}+ V_{\B, \, \tau}$. 

\section{Model and auxiliary operators}
\label{S:modeloperators}
In this section we recall results about the bottom of the spectrum of the magnetic Laplacian in model domains. 
\subsection{The half-space}
\label{SS:demiplan}
Let $\R^3_{+}:=\{(s,t,z)\in \R^3, t>0\}$ be the model half-space. We assume that the constant unitary magnetic field ${\bf B}_{\theta}$ makes an angle $\theta$ with the boundary of $\R^3_{+}$. Thanks to symmetries, we only need to study $\theta\in [0,\frac{\pi}{2}]$.   
\paragraph{Tangent case: the de Gennes operator}
 Here we assume that the magnetic field ${\bf B}_{0}$ is tangent to the boundary, then in a suitable gauge, the magnetic operator writes 
$$P_{{\bf A}_{0}, \, \R^3_{+}}=(D_{s}+t)^2+D_{t}^2+D_{z}^2 \ . $$
Using a Fourier transform in the variables $(s,z)$ we have 
\begin{equation}
\label{E:directintcastangent}
P_{{\bf A}_{0}, \, \R^3_{+}}=\int_{(\tau,k)\in\R^2}^{\bigoplus} \dg_{\tau}+k^2 \d \tau \d k\  
\end{equation}
where the de Gennes operator $\dg_{\tau}$ is defined as the following 1D-operator:
$$ \dg_{\tau}:=D_{t}^2+(t-\tau)^2, \quad t>0 $$
on the domain
\begin{equation}
\label{D:B2Neu}
B^{2}_{\rm Neu}(\R_{+}):=\{u\in H^2(\R_{+}), \, t^2u \in L^2(\R_{+}), \, u'(0)=0 \} \ .
\end{equation}
 This operator has compact resolvent and we define 
\begin{equation}
\label{D:mudg}
\mudg_{1}(\tau):=\inf \spec (\dg_{\tau})
\end{equation}
 its first eigenvalue. We have (see \cite{HePer10} and also \cite{FouHel10}): 
$$ \lim_{\tau\to -\infty}\mudg_{1}(\tau)=+\infty \quad \mbox{ and } \quad \lim_{\tau\to+\infty}\mudg_{1}(\tau)=1 \ . $$
It is shown in \cite{Bol92} and \cite{DauHe93} that it exists $\xi_{0}>0$ such that the function $\tau \mapsto \mudg_{1}(\tau)$ is decreasing on $(-\infty,\xi_{0}]$ and increasing on $[\xi_{0},+\infty)$, therefore it has a unique minimum denoted by $\Theta_{0}$, in addition this minimum is non-degenerate and we have $\xi_{0}^2=\Theta_{0}$. Refined numerical computations coming from \cite{Bo08} provide the following approximation with an error inferior to $10^{-9}$:
 \begin{equation}
\label{valeur-numerique-theta0}
\Theta_{0}\simeq 0.590106125 \, \text{ and } \, \xi_{0}\simeq0.76818365314 \ .
\end{equation}
Due to \eqref{E:directintcastangent},  when $\curl {\bf A}$ is tangent to the boundary of $\R^3_{+}$ we have:
$$\spec(P_{{\bf A}, \, \R^3_{+}})=[\Theta_{0},+\infty) \ . $$
\paragraph{Non tangent case}
We now assume that the magnetic field makes an angle $\theta\in(0,\frac{\pi}{2}]$ with the boundary of $\R^3_{+}$. After using a rotation, we take ${\bf B}_{\theta}=(\cos\theta,\sin\theta,0)$ and we choose an associated magnetic potential by taking ${\bf A}_{\theta}(s,t,z)=(0,0,t\cos\theta-s\sin\theta)$. The magnetic Laplacian writes:
$$P_{{\bf A}_{\theta}, \, \R^3_{+}}=D_{s}^2+D_{t}^2+(D_{z}-t\cos\theta+s\sin\theta)^2 \ . $$
We introduce the bottom of its spectrum 
\begin{equation}
\label{D:sigma}
\sigma(\theta):=\inf\spec(P_{{\bf A}_{\theta}, \, \R^3_{+}}) \ . 
\end{equation}
This model spectral quantity has been widely studied (see  \cite{LuPan00_2}, \cite{LuPan00}, \cite{HeMo02} \cite{MoTr05} or more recently \cite{BoDauPopRay12}). Let us recall that the function $\theta\mapsto \sigma(\theta)$ is increasing from $(0,\frac{\pi}{2}]$ onto $(\Theta_{0},1]$ (see \cite{LuPan00}).
\subsection{The wedge with a magnetic field tangent to the edge}
\label{SS:opmodelewedgetangent}
 We deal with the case where the magnetic field ${\bf B}=(0,0,1)$ is tangent to the edge $\{x_{3}=0\}$. In that case the electric potential on the sector is $V_{\B, \, \tau}=\tau^2$ and we have 
 $$\sse({\bf B};\alpha,\tau)=\mu(\alpha)+\tau^2$$
 where $\mu(\alpha)$ is the bottom of the spectrum of $P_{\Add, \, \Secteur_{\alpha}}$ with $\curl \Add=1$. Therefore thanks to \eqref{E:relationfond} we get in that case: 
 $$\sd({\bf B};\alpha)=\mu(\alpha) \ .$$
 Let us gather results coming from \cite{Bon06} about the model operator $P_{\Add, \, \Secteur_{\alpha}}$:
 \begin{prop}
 Let $\Add$ be a 2D-magnetic potential such that $\curl \Add=1$, $P_{\Add, \, \Secteur_{\alpha}}$ the associated magnetic Laplacian on the sector $\Secteur_{\alpha}$ and $\mu(\alpha)=\inf \spec(P_{\Add, \, \Secteur_{\alpha}})$. Then we have: 
 \begin{enumerate}
\item $\specess(P_{\Add, \, \Secteur_{\alpha}})=[\Theta_{0},+\infty)$,
 \item\label{mulestheta0} $\forall \alpha\in (0,\frac{\pi}{2}]$, $\mu(\alpha)<\Theta_{0}$,
 \item Asymptotics for the small angle limit: $\mu(\alpha)\underset{\alpha\to0}{\sim}\frac{\alpha}{\sqrt{3}}$.
 \end{enumerate}
 \end{prop}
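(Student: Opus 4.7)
I treat the three items separately, using only the half-plane de Gennes analysis recalled in Subsection~\ref{SS:demiplan} and the self-similarity of $\Secteur_\alpha$.

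\emph{Item (1).} For $\inf\specess(P_{\Add,\Secteur_\alpha})\geq\Theta_0$ I would invoke Persson's characterization of the bottom of the essential spectrum: for $u$ supported outside a large ball around the apex, $\supp u$ lies in a tube around one of the two faces that is isometric, after a gauge transformation, to an open subset of a half-plane carrying a unit magnetic field, so the half-plane analysis yields a Rayleigh-quotient lower bound of $\Theta_0$. Conversely, for each $\lambda\geq\Theta_0$ I would pick $\tau$ with $\mudg_{1}(\tau)=\lambda$ and build Weyl sequences of the form
\begin{equation*}
u_n(s,t)=\chi_n(s)\,\phi_\tau(t)\,e^{is\tau},
\end{equation*}
where $(s,t)$ are tangential/normal coordinates along one face, $\phi_\tau$ is the normalized ground state of $\dg_\tau$, and $\chi_n$ is an $L^2$-normalized longitudinal bump translated to infinity along the face; this shows $[\Theta_0,+\infty)\subset\specess$.

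\emph{Item (2).} Since $\Theta_0=\inf\specess$, it suffices to exhibit, for each $\alpha\in(0,\tfrac{\pi}{2}]$, a trial function $u_\alpha$ in the form domain satisfying $\|(-i\nabla-\Add)u_\alpha\|^2<\Theta_0\|u_\alpha\|^2$. I would take $u_\alpha$ to be a truncated copy of the half-plane de Gennes ground state $e^{is\xi_0}\phi_{\xi_0}(t)$, written in coordinates adapted to one face of the sector. Because $\alpha\leq\pi/2$, the opposite face lies on the favorable side of the magnetic orbit lines, so a perturbative computation around the extremum $\mudg_{1}(\xi_0)=\Theta_0$, exploiting $\mudg_{1}'(\xi_0)=0$ together with the sign of the correction produced by the corner, yields a strictly negative second-order contribution to the Rayleigh quotient. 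The most delicate bookkeeping is checking that the cut-off errors are of higher order, which follows from the Agmon-type decay of $\phi_{\xi_0}$.

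\emph{Item (3).} This is the step I expect to be the main obstacle. To extract the small-angle asymptotic I would rescale coordinates so that $\Secteur_\alpha$ takes a fixed shape independent of $\alpha$ (for instance via polar coordinates with $\theta=\alpha\tilde\theta/2$, $\tilde\theta\in(-1,1)$, combined with an adapted radial scaling). In the rescaled operator the transverse direction is confined at scale $\alpha$ while the longitudinal one develops an effective confining potential produced by the magnetic vector potential. A Born--Oppenheimer reduction onto the transverse Neumann ground state, which at leading order is the constant mode, reduces the eigenvalue problem to the minimization of an explicit one-dimensional Rayleigh quotient on $\R_+$; the factor $\tfrac13$ appears from the transverse average $\tfrac12\int_{-1}^{1}\tilde\theta^2\,d\tilde\theta$, and optimization of the residual Fourier parameter produces the coefficient $1/\sqrt 3$, in analogy with standard thin-strip magnetic computations. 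The upper bound follows from tensorial quasi-modes, while the sharp matching lower bound requires Agmon-type transverse localization of approximate ground states and careful control of the subleading remainders, which is the heart of the difficulty.
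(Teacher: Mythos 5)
First, a point of reference: the paper does not prove this proposition at all --- it is explicitly a list of results imported from \cite{Bon06} (and, for $\alpha=\pi/2$, from \cite{Ja01}), so there is no in-paper argument to compare against. Your sketches for items (1) and (3) are nevertheless correct in outline and essentially reproduce Bonnaillie's strategy: for (1), Persson's criterion plus an IMS partition (note you need \emph{three} pieces --- a neighbourhood of each face and an interior region where the whole-plane bound $\geq \curl\Add =1>\Theta_{0}$ applies, not just ``a tube around one of the two faces''), and translated de Gennes quasimodes $\chi_{n}(s)\phi_{\tau}(t)e^{is\tau}$ for the Weyl sequences, using that $\mudg_{1}$ maps $(-\infty,\xi_{0}]$ onto $[\Theta_{0},+\infty)$; for (3), the rescaling to $\R_{+}\times(-\tfrac12,\tfrac12)$, angular averaging giving the effective radial operator $-\Delta+\tfrac{\alpha^{2}}{12}\rho^{2}$ whose ground energy is $2\cdot\tfrac{\alpha}{2\sqrt{3}}=\tfrac{\alpha}{\sqrt 3}$ (this is exactly the computation redone in Section \ref{S:ub} of the paper for the upper bound), with the matching lower bound being, as you say, the technical part.

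Item (2) is where there is a genuine gap. The whole content of the statement is the \emph{sign} of the corner correction, and your sketch asserts it rather than proves it: ``the opposite face lies on the favorable side of the magnetic orbit lines'' and ``a strictly negative second-order contribution'' are not arguments. If you take the truncated half-plane ground state $e^{is\xi_{0}}\phi_{\xi_{0}}(t)$ adapted to one face, then since it solves the eigenvalue equation pointwise with eigenvalue $\Theta_{0}$, integration by parts over $\Secteur_{\alpha}$ gives $\FQ(u)=\Theta_{0}\|u\|^{2}+(\text{boundary term on the second face})+(\text{cutoff errors})$, and everything hinges on showing that this boundary term --- an explicit integral involving $\phi_{\xi_{0}}$ and its derivative along a ray making angle $\alpha$ with the first face --- is strictly negative. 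That computation is precisely where the hypothesis $\alpha\leq\tfrac{\pi}{2}$ must enter quantitatively, via identities for $\phi_{\xi_{0}}$ such as $\int_{0}^{\infty}(t-\xi_{0})\phi_{\xi_{0}}^{2}\,dt=0$ and $\phi_{\xi_{0}}(0)^{2}$ expressed through $\Theta_{0}$; nothing about $\mudg_{1}'(\xi_{0})=0$ alone forces the sign. A telling sanity check: the paper records that $\mu(\alpha)<\Theta_{0}$ for $\alpha\in(\tfrac{\pi}{2},\pi)$ is still an \emph{open problem}, even though numerics support it --- so no soft perturbative argument around the extremum can decide the sign, and the known proofs (Jadallah and Pan for $\alpha=\tfrac{\pi}{2}$, Bonnaillie for $(0,\tfrac{\pi}{2}]$) all rest on an explicit trial-function computation that your proposal does not carry out.
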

 Numerical simulations coming from \cite{BoDauMaVial07} show that \eqref{mulestheta0} seems to hold for all $\alpha\in(0,\pi)$. In addition $\alpha\mapsto \mu(\alpha)$ seems to be increasing for $\alpha\in(0,\pi)$. These two problems are still open.
 
 \subsection{Auxiliary operators}
 Let us define the half-planes $\Hpup:=\{x_{2}<x_{1}\tan\frac{\alpha}{2} \}$ and $\Hplow:=\{x_{2}>-x_{1}\tan\frac{\alpha}{2} \}$ such that $\Secteur_{\alpha}=\Hpup\cap \Hplow$. In this section we study the operators $(D_{x_{1}}-x_{2}b_{3})^2+D_{x_{2}}^2+(x_{1}b_{2}-x_{2}b_{1}-\tau)^2$ acting on $L^2(\Hpup)$ and $L^2(\Hplow)$ with Neumann boundary condition. We denote by $P_{\Add,\, \Hpup}+ V_{\B, \, \tau}$ and $P_{\Add,\, \Hplow}+ V_{\B, \, \tau}$ these two operators. They have been introduced in \cite[Section 5]{Pan02} where the author gives bounds for the bottom of their spectrum. In this section we give explicit formulae using the spectral model quantities $\mudg_{1}$ and $\sigma$ coming from the previous Subsection.
 \paragraph{Operators for the upper boundary}
\begin{lem}
\label{L:Opmodeledemiplanup}
Assume that the magnetic field writes ${\bf B}=(\sin\gamma \cos\tfrac{\alpha}{2},\sin\gamma \sin\tfrac{\alpha}{2},\cos\gamma)$. Then we have
\begin{equation}
\label{R:opmodelmudg}
\inf \spec\left(P_{\Add,\, \Hpup}+ V_{\B, \, \tau}\right)=\inf_{\xi_{2}\in\R}\left(\mudg_{1}(\xi_{2}\cos\gamma+\tau\sin\gamma)+(\xi_{2}\sin\gamma-\tau\cos\gamma)^2\right)
\end{equation}
\end{lem}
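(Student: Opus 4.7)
The plan is to straighten the face $\partial\Hpup$ by an orthogonal change of variables, then exploit the fact that $\B$ is tangent to that face to reduce the two-dimensional problem to a one-parameter family of shifted de Gennes operators. First, I introduce coordinates adapted to the upper face:
\[
s := x_{1}\cos\tfrac{\alpha}{2}+x_{2}\sin\tfrac{\alpha}{2},\qquad \sigma := x_{1}\sin\tfrac{\alpha}{2}-x_{2}\cos\tfrac{\alpha}{2},
\]
so that $s$ is tangential to the face and $\sigma$ is the signed distance to it, positive inside $\Hpup$. In these coordinates $\Hpup$ is mapped to the standard half-plane $\{\sigma>0\}$. The key identity, which is precisely where the tangency of $\B$ is used, is
\[
x_{1} b_{2} - x_{2} b_{1} = \sin\gamma\,\bigl(x_{1}\sin\tfrac{\alpha}{2}-x_{2}\cos\tfrac{\alpha}{2}\bigr) = \sin\gamma\cdot\sigma,
\]
so the reduced potential becomes $V_{\B,\tau}=(\sin\gamma\,\sigma-\tau)^{2}$, depending only on $\sigma$.

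By gauge invariance I then choose the Landau-type potential $\tilde{\Add}(s,\sigma):=(-\cos\gamma\,\sigma,0)$, whose 2D curl equals $\cos\gamma=b_{3}$. The transformed operator on $\{\sigma>0\}$ with Neumann condition at $\sigma=0$ is
\[
(D_{s}+\cos\gamma\,\sigma)^{2} + D_{\sigma}^{2} + (\sin\gamma\,\sigma-\tau)^{2},
\]
which is translation-invariant in $s$. A partial Fourier transform in $s$ therefore produces, as in \eqref{E:decompintdir1}, a direct integral over $\xi_{2}\in\R$ with fibers
\[
L(\xi_{2}) := D_{\sigma}^{2}+(\xi_{2}+\cos\gamma\,\sigma)^{2}+(\sin\gamma\,\sigma-\tau)^{2}
\]
acting on $B^{2}_{\rm Neu}(\R_{+})$.

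Expanding and completing the square in $\sigma$ regroups $L(\xi_{2})$ as a shifted de Gennes operator plus a constant:
\[
L(\xi_{2}) = \dg_{\tau\sin\gamma-\xi_{2}\cos\gamma}+(\xi_{2}\sin\gamma+\tau\cos\gamma)^{2},
\]
so that $\inf\spec L(\xi_{2})=\mudg_{1}(\tau\sin\gamma-\xi_{2}\cos\gamma)+(\xi_{2}\sin\gamma+\tau\cos\gamma)^{2}$. Taking the infimum over $\xi_{2}\in\R$ and substituting $\xi_{2}\mapsto-\xi_{2}$ gives exactly the right-hand side of \eqref{R:opmodelmudg}.

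I expect no deep obstacle: the argument is mostly a clean change of variable. The one subtlety to watch is that the map $(x_{1},x_{2})\mapsto(s,\sigma)$ above has determinant $-1$, so the transported 2D curl picks up a sign; this is absorbed by the gauge choice (or, equivalently, by composing a genuine rotation with the flip $\sigma\mapsto-\sigma$, and using that the magnetic Laplacian with field $B$ and with field $-B$ are spectrally equivalent). Once this bookkeeping is done, the identification with $\dg$ is immediate and the conclusion follows from the definition \eqref{D:mudg} of $\mudg_{1}$.
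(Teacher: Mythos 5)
Your argument is correct and follows essentially the same route as the paper: straighten the face $\partial\Hpup$ by an orthogonal change of variables (the paper uses a rotation of angle $\tfrac{\pi-\alpha}{2}$ where you use a reflection-type map, whence your determinant $-1$ remark, which you handle properly), choose a gauge making the operator translation-invariant along the face, take a partial Fourier transform in the tangential variable, and complete the square in the fiber to recognize a shifted de Gennes operator plus the constant $(\xi_{2}\sin\gamma-\tau\cos\gamma)^{2}$. The algebra in your completion of the square checks out and matches \eqref{R:opmodelmudg} after the substitution $\xi_{2}\mapsto-\xi_{2}$.
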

\begin{proof}For a suitable choice of gauge the expression of the operator is 
$$P_{\Add,\, \Hpup}+ V_{\B, \, \tau}=(D_{x_{1}})^2+(D_{x_{2}}-x_{1}\cos\gamma)^2+(x_{1}\sin\gamma\sin\tfrac{\alpha}{2}-x_{2}\sin\gamma\cos\tfrac{\alpha}{2}-\tau)^2$$
where $(\gamma,\frac{\pi-\alpha}{2})$ are the spherical coordinates of ${\bf B}$. Using a rotation of angle $\frac{\pi-\alpha}{2}$ and a change of gauge, the operator $P_{\Add,\, \Hpup}+ V_{\B, \, \tau}$ is unitary equivalent to the Neumann realization of
$$(D_{s}-t\cos\gamma)^2+D_{t}^2+(t\sin\gamma-\tau)^2 \ , \quad (s,t)\in \R^2_{+}  \ ,$$
where $\R^2_{+}:=\{(s,t)\in \R^2, t>0 \}$.  Making a partial Fourier transform in the $s$ variable, we get that 
$$(D_{s}-t\cos\gamma)^2+D_{t}^2+(t\sin\gamma-\tau)^2 =\int_{\xi_{2}\in\R}^{\bigoplus}D_{t}^2+(\xi_{2}-t\cos\gamma)^2+(t\sin\gamma-\tau)^2 \ \d \xi_{2} $$
where the operator $D_{t}^2+(\xi_{2}-t\cos\gamma)^2+(t\sin\gamma-\tau)^2$ acts on the functions of the variable $t$ belonging to $B^{2}_{\rm Neu}(\R_{+})$ (see \eqref{D:B2Neu}). Since we have for fixed $\xi_{2}\in\R$:
\begin{multline*}
\inf\spec \left(D_{t}^2+(\xi_{2}-t\cos\gamma)^2+(t\sin^2\gamma-\tau)^2\right)\\=\inf\spec \left(D_{t}^2+(t-\tau\sin\gamma-\xi_{2}\cos\gamma)^2\right)+(\xi_{2}\sin\gamma-\tau\cos\gamma)^2 \ ,
\end{multline*}
we get \eqref{R:opmodelmudg} using \eqref{D:mudg}.
\end{proof}

 \paragraph{Operators for the lower boundary}
\begin{lem}
\label{L:Opmodeledemiplanlow}
Assume that the magnetic field writes ${\bf B}=(\sin\gamma \cos\tfrac{\alpha}{2},\sin\gamma \sin\tfrac{\alpha}{2},\cos\gamma)$. Then the spectrum of $P_{\Add,\, \Hplow}+ V_{\B, \, \tau}$ does not depend of $\tau$ and we have
\begin{equation}
\label{R:opmodelsigma}
\forall \tau \in \R, \quad \inf \spec\left(P_{\Add,\, \Hplow}+ V_{\B, \, \tau}\right)=\sigma(\beta)
\end{equation}
with $\beta=\arcsin(\sin\alpha \sin\gamma)$.
\end{lem}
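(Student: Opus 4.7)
The plan is to follow the template of Lemma~\ref{L:Opmodeledemiplanup} and reduce the 2D problem to the 3D half-space magnetic Laplacian underlying the definition of $\sigma$. A rotation aligning $\Hplow$ with the standard half-plane $\R^2_+=\{t>0\}$, together with a gauge change bringing the rotated 2D magnetic potential to $(-t\cos\gamma,0)$, transforms $P_{\Add,\Hplow}+V_{\B,\tau}$ into, on $L^2(\R^2_+)$ with Neumann condition at $t=0$,
\begin{equation*}
\tilde P_\tau := (D_s+t\cos\gamma)^2+D_t^2+\bigl(s\sin\gamma\sin\alpha - t\sin\gamma\cos\alpha - \tau\bigr)^2.
\end{equation*}
In contrast with the upper-face case, the linear form defining the electric potential now depends on both $s$ and $t$, so a partial Fourier transform in $s$ alone no longer eliminates $\tau$.

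The key observation is a hidden translation symmetry in $s$. When $\sin\gamma\sin\alpha\neq 0$, the unitary $U_c:u(s,t)\mapsto u(s-c,t)$ with $c=\tau/(\sin\gamma\sin\alpha)$ satisfies $U_c^{*}\tilde P_\tau U_c=\tilde P_0$, which proves that $\spec(\tilde P_\tau)$ is independent of $\tau$.

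To compute the infimum, I would view $\tilde P_\tau$ as the Fourier slice (at dual variable $-\tau$) of the 3D operator
\begin{equation*}
H_{3D}:=(D_s+t\cos\gamma)^2+D_t^2+\bigl(D_z + s\sin\gamma\sin\alpha - t\sin\gamma\cos\alpha\bigr)^2
\end{equation*}
on the half-space $\{t>0\}$ with Neumann condition. A direct computation shows that its magnetic field equals $(\sin\gamma\cos\alpha,\sin\gamma\sin\alpha,\cos\gamma)$, which is a unit vector making angle $\beta=\arcsin(\sin\alpha\sin\gamma)$ with the boundary $\{t=0\}$. By rotational invariance of the half-space problem within its boundary plane, $H_{3D}$ is unitarily equivalent to the operator $P_{{\bf A}_\beta,\R^3_+}$ defining $\sigma(\beta)$ in Subsection~\ref{SS:demiplan}, so $\inf\spec(H_{3D})=\sigma(\beta)$. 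The direct integral decomposition gives $\inf\spec(H_{3D})=\inf_\tau\inf\spec(\tilde P_\tau)$; combined with the $\tau$-independence above this yields $\inf\spec(\tilde P_\tau)=\sigma(\beta)$, proving \eqref{R:opmodelsigma}.

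The main obstacle is the bookkeeping of the rotation and gauge change needed to reach the stated form of $\tilde P_\tau$, together with the verification that the curl of the associated 3D potential recovers the original magnetic field (in the rotated coordinates) and yields the correct angle $\beta$. The degenerate case $\sin\gamma=0$ (tangent field) falls outside the translation argument, but corresponds to the situation already treated in Subsection~\ref{SS:opmodelewedgetangent}.
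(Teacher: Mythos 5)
Your proof is correct and follows essentially the same route as the paper: translation invariance along the boundary of $\Hplow$ (which shifts $\tau$ by $r\sin\gamma\sin\alpha$) gives the $\tau$-independence, and the direct integral over the half-space $\Hplow\times\R$ together with the computation of the angle $\beta$ between ${\bf B}$ and the boundary identifies the infimum with $\sigma(\beta)$. The only difference is that you carry out the rotation and gauge change explicitly, whereas the paper argues in the original coordinates; your remark on the degenerate case $\sin\gamma=0$ is a welcome precision.
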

\begin{proof}
The half-plane $\Hplow$ is invariant by translation along $(\sin\frac{\alpha}{2},\cos\frac{\alpha}{2})$. Using this translation, we get that all the operators $\left(P_{\Add,\, \Hplow}+ V_{\B, \, \tau}\right)_{\tau\in\R}$ are unitary equivalent and their spectrum does not depend on $\tau$. Using a Fourier integral decomposition we have 
 $$P_{{\bf A}, \, \Hslow}=\int_{\tau\in \R}^{\bigoplus} P_{\Add,\, \Hplow}+ V_{\B, \, \tau} \d \tau \ , $$
 where $\Hslow$ is the half-space $\Hplow\times \R$. The normal of the boundary of $\Hslow$ is $(-\sin\frac{\alpha}{2},-\cos\frac{\alpha}{2},0)$. Therefore we have 
 $$\inf\spec\left(P_{\Add,\, \Hplow}+ V_{\B, \, \tau}\right) = \inf \spec \left(P_{{\bf A}, \, \Hslow}\right) \ .$$
  By an elementary computation we check that the magnetic field ${\bf B}$ makes the angle $\beta:=\arcsin(\sin\gamma\sin\alpha)$ with the boundary of $\Hslow$. Using the definition \eqref{D:sigma}, we get that the bottom of the spectrum of $P_{{\bf A}, \, \Hslow}$ is $\sigma(\beta)$.
\end{proof}

\section{Essential spectrum of the operators on the sector}
\label{S:specess}
Let $$\Upsilon:=V_{\B, \, \tau}^{-1}(\{0\}) \ \ \mbox{be the line where the electric potential vanish.}$$ Let us notice that $V_{\B, \, \tau}(x)$ is the square of the distance between $x$ and $\Upsilon$, moreover when ${\bf B}$ is tangent to a face of the wedge, the line $\Upsilon$ is parallel to one of the boundary of the sector $\Secteur_{\alpha}$. Since the domain is unbounded and the electric potential does not blow up in all directions, one should expect that the essential spectrum is not empty (see \cite[proposition 3.7]{HeMo02} for a similar situation). We denote by $\specess(P_{\Add,\, \Secteur_{\alpha}}+ V_{\B, \, \tau})$ the essential spectrum of $P_{\Add,\, \Secteur_{\alpha}}+ V_{\B, \, \tau}$ and we are looking for:
$$\ssess({\bf B};\alpha,\tau)=\inf \specess(P_{\Add,\, \Secteur_{\alpha}}+ V_{\B, \, \tau}) \ . $$
When the magnetic field is tangent to the edge, we use the results recalled in Subsection \ref{SS:opmodelewedgetangent} and we get $\ssess({\bf B},\alpha,\tau)=\Theta_{0}+\tau^2$. We will now assume that the magnetic field is not tangent to the edge, that is $\gamma\neq0$ where $\gamma$ is the first spherical coordinate of ${\bf B}$ (see \eqref{D:Bcoordpolar}). We recall a useful criterion for the characterization of the essential spectrum (see \cite{Pers60}):
\begin{lem}
We have 
$$  \ssess({\bf B};\alpha,\tau)=\lim_{R\to +\infty} \Sigma \left(P_{\Add,\, \Secteur_{\alpha}}+ V_{\B, \, \tau},R\right)$$
with
$$\Sigma \left(P_{\Add,\, \Secteur_{\alpha}}+ V_{\B, \, \tau},R\right):=\inf_{u\in \mathcal{C}_{0}^{\infty}(\overline{\Secteur_{\alpha}}\cap \complement\mathcal{B}_{R})}\frac{\FQ(u)}{\| u\|^2_{L^2(\Secteur_{\alpha})}} $$
where $\mathcal{B}_{R}$ is the ball of radius $R$ centered at the origin and $\complement\mathcal{B}_{R}$ its complementary in $\R^2$.
\end{lem}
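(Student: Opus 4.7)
The statement is Persson's classical characterisation of the bottom of the essential spectrum \cite{Pers60}, so the plan is to carry out Persson's two-sided argument in the concrete setting of the magnetic Schrödinger form $\FQ$ on the unbounded sector $\Secteur_{\alpha}$. As a preliminary observation, $R\mapsto\Sigma(P_{\Add,\Secteur_{\alpha}}+V_{\B,\tau},R)$ is nondecreasing in $R$ because the infimum is taken over a shrinking family of test functions, so the limit $\Sigma_\infty$ exists (possibly $+\infty$) and coincides with the supremum over $R$.

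To prove $\ssess({\bf B};\alpha,\tau)\leq\Sigma_\infty$, I would pick an increasing sequence $R_n\to+\infty$ together with test functions $u_n\in\mathcal{C}_{0}^{\infty}(\overline{\Secteur_{\alpha}}\cap\complement\mathcal{B}_{R_n})$ satisfying $\|u_n\|_{L^2(\Secteur_{\alpha})}=1$ and $\FQ(u_n)\leq\Sigma(P_{\Add,\Secteur_{\alpha}}+V_{\B,\tau},R_n)+1/n\leq\Sigma_\infty+1/n$. By choosing each $R_{n+1}$ larger than the outer radius of $\supp u_n$, the supports become pairwise disjoint, so $(u_n)$ is orthonormal in $\dom(\FQ)$ and converges weakly to $0$. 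The min--max principle in form-domain version then yields $\ssess({\bf B};\alpha,\tau)\leq\limsup_{n}\FQ(u_n)\leq\Sigma_\infty$.

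For the reverse inequality $\ssess({\bf B};\alpha,\tau)\geq\Sigma_\infty$, I would fix $\lambda<\Sigma_\infty$, choose $R$ such that $\Sigma(P_{\Add,\Secteur_{\alpha}}+V_{\B,\tau},R)>\lambda$, and take smooth radial cutoffs $\chi,\tilde{\chi}$ with $\chi^2+\tilde{\chi}^2\equiv1$, $\supp\chi\subset\mathcal{B}_{R+1}$, and $\tilde{\chi}\equiv0$ on $\mathcal{B}_{R}$. The IMS localisation identity then reads
\[
\FQ(u)=\FQ(\chi u)+\FQ(\tilde{\chi} u)-\int_{\Secteur_{\alpha}}\bigl(|\nabla\chi|^2+|\nabla\tilde{\chi}|^2\bigr)|u|^2,
\]
and $\FQ(\tilde{\chi}u)\geq\Sigma(\cdot,R)\|\tilde{\chi}u\|^2$ by admissibility of $\tilde{\chi}u$. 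A standard min--max argument applied to $u\mapsto\FQ(\chi u)-\lambda\|\chi u\|^2$ on the bounded Lipschitz subdomain $\Secteur_{\alpha}\cap\mathcal{B}_{R+1}$ then forces $\spec(P_{\Add,\Secteur_{\alpha}}+V_{\B,\tau})\cap(-\infty,\lambda]$ to consist of at most finitely many eigenvalues of finite multiplicity, hence $\lambda\notin\specess(P_{\Add,\Secteur_{\alpha}}+V_{\B,\tau})$.

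The main obstacle is purely technical: it is the local $L^2$-compactness of $\dom(\FQ)$ restricted to $\Secteur_{\alpha}\cap\mathcal{B}_{R+1}$. On this bounded set $\Add$ is smooth and $V_{\B,\tau}$ is smooth and bounded, so the form norm is equivalent to the $H^1$ norm and Rellich--Kondrachov applies to this Lipschitz subdomain, the corner at the origin being harmless. Once this local compactness is secured, the commutator error terms in the IMS identity are $O(\|u\|^2)$ and Persson's argument goes through verbatim.
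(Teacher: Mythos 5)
The paper does not actually prove this lemma: it is recalled as a known criterion with a citation to Persson's original article, so you are supplying an argument where the paper supplies none. Your architecture is the standard two\--sided Persson argument and is the right one: monotonicity of $R\mapsto\Sigma(P_{\Add,\,\Secteur_{\alpha}}+V_{\B,\,\tau},R)$, an $L^2$\--orthonormal sequence with pairwise disjoint supports escaping to infinity for the inequality $\ssess\leq\Sigma_\infty$ (the disjointness is exactly what makes $\FQ$ diagonal on the span, so the min--max values all sit below $\Sigma_\infty+1/n$ and hence so does $\inf\specess$), and IMS localisation plus local compactness for the reverse inequality. The upper bound and the local\--compactness discussion (form norm equivalent to $H^1$ on the bounded Lipschitz truncated sector, corner harmless) are complete as written.

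The lower bound, however, has a quantitative gap. With cutoffs transitioning on the fixed annulus $\{R\leq|x|\leq R+1\}$, the IMS error is $\int(|\nabla\chi|^2+|\nabla\tilde\chi|^2)|u|^2\leq C\|u\|^2_{L^2(\{R\leq|x|\leq R+1\})}$ with a constant $C$ of order one, not small. Writing $|u|^2=|\chi u|^2+|\tilde\chi u|^2$ on the annulus, the part of the error carried by $\chi u$ is harmless (it is absorbed into the finitely many eigenvalues of the localized operator below $\lambda+C$, still a finite number), but the part carried by $\tilde\chi u$ must be dominated by the gain $(\Sigma(\cdot,R)-\lambda)\|\tilde\chi u\|^2$ coming from the exterior estimate, and this fails whenever $\Sigma(\cdot,R)-\lambda<C$. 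As written your argument only yields $\ssess\geq\Sigma_\infty-C$, which is not the statement. The standard repair is one line but must be said: let the cutoffs transition on $\{R\leq|x|\leq R+L\}$ and send $L\to+\infty$, so the commutator error is $O(L^{-2})\|u\|^2$ and eventually beats $\Sigma(\cdot,R)-\lambda$ (the exterior estimate survives because $\Sigma(\cdot,R')\geq\Sigma(\cdot,R)$ for $R'\geq R$); alternatively, run the lower bound on a singular Weyl sequence $(u_n)$ for $\inf\specess$, for which your local compactness gives $\|u_n\|_{L^2(\mathcal{B}_{R+L})}\to0$ and the fixed\--width error vanishes in the limit. With either fix the proof is correct.
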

%
\begin{prop}
\label{P:specess}
Assume that the magnetic field writes ${\bf B}=(\sin\gamma \cos\tfrac{\alpha}{2},\sin\gamma \sin\tfrac{\alpha}{2},\cos\gamma)$. We have: 
\begin{equation}
\label{F:infsurxi2}
\ssess({\bf B};\alpha,\tau)=\inf_{\xi_{2}\in\R}\left(\mudg_{1}(\xi_{2}\cos\gamma+\tau\sin\gamma)+(\xi_{2}\sin\gamma-\tau\cos\gamma)^2\right) \ . 
\end{equation}
\end{prop}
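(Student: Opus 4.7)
The right-hand side of \eqref{F:infsurxi2} is precisely $\inf\spec(P_{\Add,\, \Hpup}+ V_{\B, \, \tau})$ by Lemma \ref{L:Opmodeledemiplanup}. So the plan is to prove the identity
$$\ssess({\bf B};\alpha,\tau)=\inf\spec(P_{\Add,\, \Hpup}+ V_{\B, \, \tau}),$$
for $\gamma\neq 0$ (the tangent case $\gamma=0$ having been treated just before the statement, and the formula is consistent with it). The guiding geometric fact is that, since $\gamma\neq 0$, the vanishing line $\Upsilon$ of $V_{\B,\tau}$ is parallel to the upper face of $\Secteur_\alpha$ but \emph{not} to the lower face, hence $V_{\B,\tau}$ grows quadratically along the lower face and is translation-invariant along the upper face. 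Thus far from the origin only a neighborhood of the upper face can sustain bounded-energy states, and there the sector looks like $\Hpup$.

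Both bounds will be obtained through Persson's criterion stated above. For the \textbf{upper bound} $\ssess\le \inf\spec(P_{\Add,\, \Hpup}+ V_{\B, \, \tau})$, I would pick, for any $\epsilon>0$, a quasi-mode $v\in \mathcal{C}^\infty_0(\overline{\Hpup})$ for the half-plane operator with Rayleigh quotient within $\epsilon$ of the infimum, then translate it along ${\bf t}_n := n(\cos\tfrac{\alpha}{2},\sin\tfrac{\alpha}{2})$ (the direction of the upper face, parallel to $\Upsilon$) and multiply by the gauge phase $e^{\frac{ib_3}{2}x\wedge {\bf t}_n}$ from the translation invariance principle of Subsection 1.2. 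Because $V_{\B,\tau}$ is invariant under this translation, the Rayleigh quotient is preserved. The direction $(\cos\tfrac{\alpha}{2},\sin\tfrac{\alpha}{2})$ moves any point of $\Hpup$ deeper into $\Secteur_\alpha$ (its distance to the lower face grows by $\sin\alpha$ per unit), so for $n$ large the translated quasi-mode is supported in $\Secteur_\alpha\cap\complement\mathcal{B}_R$, giving $\Sigma(P_{\Add,\, \Secteur_{\alpha}}+ V_{\B, \, \tau},R)\le \inf\spec(P_{\Add,\, \Hpup}+ V_{\B, \, \tau})+\epsilon$.

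For the \textbf{lower bound}, I would introduce the signed distance to the upper face $y(x) := x_1\sin\tfrac{\alpha}{2}-x_2\cos\tfrac{\alpha}{2}\ge 0$, in terms of which $V_{\B,\tau}(x)=\sin^2\gamma\,(y-\tau/\sin\gamma)^2$. For a parameter $w>0$, introduce an IMS partition $\chi_1^2(y)+\chi_2^2(y)=1$ with $\chi_1$ supported in $\{y\le 2w\}$, $\chi_2$ in $\{y\ge w\}$, and $|\chi_j'|\le C/w$. The IMS localization formula yields
$$\FQ(u)=\FQ(\chi_1 u)+\FQ(\chi_2 u)-\sum_{j=1,2}\|\chi_j'\,u\|^2.$$
For $u\in\mathcal{C}^\infty_0(\overline{\Secteur_\alpha}\cap\complement\mathcal{B}_R)$ with $R$ large enough (depending on $w$), the support of $\chi_1 u$ lies in a strip around the upper face but is bounded away from the lower face; extending by zero gives an admissible function on $\Hpup$, whence $\FQ(\chi_1 u)\ge \inf\spec(P_{\Add,\, \Hpup}+ V_{\B, \, \tau})\,\|\chi_1 u\|^2$. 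On $\supp\chi_2$ one has $V_{\B,\tau}\ge \sin^2\gamma\,(w-|\tau|/\sin\gamma)^2 =: M(w)$, which tends to $+\infty$ as $w\to +\infty$ (using $\gamma\neq 0$), so $\FQ(\chi_2 u)\ge M(w)\|\chi_2 u\|^2$. Combining these with the $O(1/w^2)$ IMS error and letting first $R\to\infty$ then $w\to\infty$ gives $\ssess({\bf B};\alpha,\tau)\ge \inf\spec(P_{\Add,\, \Hpup}+ V_{\B, \, \tau})$.

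The main obstacle I anticipate is the geometric bookkeeping in the lower bound: one must choose $R$ as a suitable function of $w$ so that $\chi_1 u$ is genuinely supported away from the lower face, ensuring that its zero extension preserves the Neumann form. A minor subtlety is also to check that the gauge phase used in the translation step of the upper bound makes the Rayleigh quotient on $\Secteur_\alpha$ coincide (not just be bounded by) with that on $\Hpup$, which follows from the invariance principle for the 2D magnetic Laplacian recalled in Subsection 1.2.
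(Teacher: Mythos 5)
Your proposal is correct and follows essentially the same route as the paper: both bounds go through Persson's criterion, the upper bound by translating a near-optimal quasi-mode of $P_{\Add,\,\Hpup}+V_{\B,\,\tau}$ along the upper face with the gauge phase, and the lower bound by an IMS partition isolating a neighbourhood of the upper face (where the form is bounded below by $\inf\spec\left(P_{\Add,\,\Hpup}+V_{\B,\,\tau}\right)$) from a region where $V_{\B,\,\tau}$ is uniformly large. The only difference is cosmetic: the paper cuts along angular sectors in polar coordinates, so the IMS error is $O(R^{-2})$ and the single Persson limit $R\to\infty$ suffices, whereas your strip cutoff of fixed width $w$ forces the extra limit $w\to\infty$ after $R\to\infty$; both versions are valid.
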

\begin{proof}
We show that $\ssess({\bf B};\alpha,\tau)=\inf \spec \left( P_{\Add,\, \Hpup}+ V_{\B, \, \tau}\right)$:
\\
{\sc Upper bound.}
Let $\epsilon>0$. Using the min-max principle we find a normalized function $u_{\epsilon}\in \mathcal{C}_{0}^{\infty}(\overline{\Hpup})$ such that $$\langle \left(P_{\Add,\, \Hpup}+ V_{\B, \, \tau}\right)u_{\epsilon},u_{\epsilon} \rangle_{L^2(\Hpup)}<\inf \spec\left(P_{\Add,\, \Hpup}+ V_{\B, \, \tau}\right)+\epsilon \ . $$ Let ${\bf t}_{\alpha}=(\cos\frac{\alpha}{2},\sin\frac{\alpha}{2})$ be the direction vector of the line $\Upsilon$ and for $r>0$ let $u_{\epsilon,r}(x):=e^{\frac{i}{2}rb_{3}{\bf t}_{\alpha}\wedge x}u_{\epsilon}(x-r{\bf t}_{\alpha})$. Let $R>0$, we have $\supp(u_{\epsilon,r})=\supp(u_{\epsilon})+r{\bf t}_{\alpha}$ and therefore it exists $r_{0}>0$ such that 
$\forall r>r_{0}, \quad \supp(u_{\epsilon,r})\subset \Secteur_{\alpha}\cap \complement \mathcal{B}_{R} \  $ and $u_{\epsilon,r}\in \dom(P_{\Add,\, \Secteur_{\alpha}}+ V_{\B, \, \tau})$.  We have $V_{\B, \, \tau}(x-r{\bf t}_{\alpha})=V_{\B, \, \tau}(x)$ hence from the translation principle we have 
 $$\FQ(u)=\langle \left(P_{\Add,\, \Secteur_{\alpha}}+ V_{\B, \, \tau}\right)u_{\epsilon,r},u_{\epsilon,r} \rangle_{L^2(\Secteur_{\alpha})}=\langle \left(P_{\Add,\, \Hpup}+ V_{\B, \, \tau}\right)u_{\epsilon},u_{\epsilon} \rangle_{L^2(\Hpup)} \ .$$
We deduce from the Persson's Lemma that $\ssess({\bf B};\alpha,\tau)\leq\inf \spec \left( P_{\Add,\, \Hpup}+ V_{\B, \, \tau}\right)$.
\\
{\sc Lower bound.} We denote by $(\rho,\phi)$ the polar coordinates of $\R^2$. Let $\chi_{1}^{\rm pol}$ and $\chi_{2}^{\rm pol}$ in $\mathcal{C}^{\infty}(\Secteur_{\alpha})$ that satisfy $0\leq\chi_{j}^{\rm pol} \leq 1$ and $\chi_{j}^{\rm pol}(r,\phi)=\chi_{j}^{\rm pol}(1,\phi)$. We assume that $\chi_{1}^{\rm pol}$ satisfies $\chi_{1}^{\rm pol}(r,\phi)=1$ when $\phi\in(\tfrac{\alpha}{4},\tfrac{\alpha}{2})$ and $\chi_{1}^{\rm pol}(r,\phi)=0$ when $\phi\in(-\tfrac{\alpha}{2},-\tfrac{\alpha}{4})$. We assume that $\chi_{2}^{\rm pol}$ satisfies $(\chi_{1}^{\rm pol})^2+(\chi_{2}^{\rm pol})^2=1$ and we denote by $\chi_{1}$ and $\chi_{2}$ the associated functions in cartesian coordinates. By construction for all $x\in \Secteur_{\alpha}$ we have $\chi_{j}(x)=\chi_{j}\big(\frac{x}{\| x\|}\big)$. We deduce:
$$\forall j\in\{1,2\}, \exists C_{0}, \forall R>0,  \forall x\in \Secteur_{\alpha}\cap\complement\mathcal{B}_{R}, \quad |\nabla\chi_{j}(x)|^2 \leq \frac{C_{0}}{R^2} \ . $$
Let $u\in \mathcal{C}_{0}^{\infty}(\overline{\Secteur_{\alpha}})$, the IMS formula (see \cite{CyFrKiSi87}) provides 
$$\FQ(u)=\sum_{j}\FQ(\chi_{j}u)-\sum_{j}\| \nabla\chi_{j}u\|^2  \ .$$
Since $\supp(\chi_{1}u)\subset \Hpup$ we have $\FQ(\chi_{1}u)\geq\inf\spec\left(P_{\Add,\, \Hpup}+ V_{\B, \, \tau}\right) \|\chi_{1}u \|^2_{L^2(\Secteur_{\alpha})}$. On the other part, elementary computations give $R_{0}>0$ such that for $R>R_{0}$ we have $\dist(\supp(\chi_{2}u),\Upsilon)=|R\sin\frac{\alpha}{4}\sin\gamma+\tau|$, therefore:
$$\forall R>R_{0},\forall x\in \supp(\chi_{2}u), \quad V_{{\B}, \, \tau}(x)\geq |R\sin\tfrac{\alpha}{4}\sin\gamma+\tau|^2$$
and for $R>R_{0}$ we get $\FQ(\chi_{2}u)\geq |R\sin\frac{\alpha}{4}+\tau|^2\| \chi_{2}u\|^2$. We deduce that for $R>R_{0}$:
$$\Sigma(P_{\Add,\, \Secteur_{\alpha}}+ V_{\B, \, \tau},R) \geq \inf\spec\left(P_{\Add,\, \Hpup}+ V_{\B, \, \tau}\right)-\frac{C_{0}}{R^2} $$
and we deduce $\ssess({\bf B};\alpha,\tau)\geq\inf \spec \left( P_{\Add,\, \Hpup}+ V_{\B, \, \tau}\right)$ from Persson's Lemma. We conclude using Lemma \eqref{L:Opmodeledemiplanup}.
\end{proof}

 We have an Agmon estimate for any eigenfunction associated to an eigenvalue below the essential spectrum. 
\begin{cor}
\label{C:expdecay}
Let ${\bf B}$ be a magnetic field tangent to a face of the wedge and $(\lambda,u_{\lambda})$ an eigenpair of $P_{\Add, \, \Secteur_{\alpha}}+V_{\B, \, \tau}$ such that $\lambda<\ssess({\bf B};\alpha,\tau)$. We have 
$$\forall \eta \in (0,\sqrt{\ssess({\bf B};\alpha,\tau)-\lambda}), \exists C>0, \quad \FQ(e^{\eta \Phi}u_{\lambda})<C\|u_{\lambda} \|_{L^2(\Secteur_{\alpha})}  $$
with $\Phi(x_{1},x_{2})=\sqrt{x_{1}^2+x_{2}^2}$.
\end{cor}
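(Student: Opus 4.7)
The plan is to follow the classical Agmon strategy for magnetic Schr\"odinger operators, combined with the Persson-type localization already used in the proof of Proposition \ref{P:specess}. I set $\Phi(x_{1},x_{2})=\sqrt{x_{1}^{2}+x_{2}^{2}}$ and $v:=e^{\eta\Phi}u_{\lambda}$, noting that $|\nabla\Phi|=1$ almost everywhere on $\Secteur_{\alpha}$. The first step is to derive the gauge-invariant Agmon identity
\[
\FQ(v)=\lambda\|v\|_{L^{2}(\Secteur_{\alpha})}^{2}+\eta^{2}\||\nabla\Phi|\,v\|_{L^{2}(\Secteur_{\alpha})}^{2}=(\lambda+\eta^{2})\|v\|_{L^{2}(\Secteur_{\alpha})}^{2}.
\]
This identity follows by testing the eigenvalue equation against $e^{2\eta\Phi}u_{\lambda}$ and using the conjugation relation $e^{-\eta\Phi}(-i\nabla-\Add)e^{\eta\Phi}=(-i\nabla-\Add)-i\eta\nabla\Phi$; the magnetic Neumann condition on $\partial\Secteur_{\alpha}$ kills the boundary terms in the integration by parts.

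Next I would re-use the radial IMS partition of unity $\chi_{1}^{2}+\chi_{2}^{2}=1$ from Proposition \ref{P:specess}, now adjusted so that $\chi_{1}\equiv 1$ on $\mathcal{B}_{R}$, $\supp\chi_{1}\subset\mathcal{B}_{2R}$ and $|\nabla\chi_{j}|^{2}\le C_{0}/R^{2}$. The IMS formula applied to $v$ yields $\FQ(v)=\FQ(\chi_{1}v)+\FQ(\chi_{2}v)-\sum_{j}\||\nabla\chi_{j}|\,v\|^{2}$. Since $\supp(\chi_{2}v)\subset\Secteur_{\alpha}\cap\complement\mathcal{B}_{R}$, Persson's lemma gives, for any $\epsilon>0$ and $R$ large enough, the lower bound $\FQ(\chi_{2}v)\ge\bigl(\ssess({\bf B};\alpha,\tau)-\epsilon\bigr)\|\chi_{2}v\|^{2}$. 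Combining this with the Agmon identity above and splitting $\|v\|^{2}=\|\chi_{1}v\|^{2}+\|\chi_{2}v\|^{2}$, I obtain
\[
\Bigl(\ssess({\bf B};\alpha,\tau)-\lambda-\eta^{2}-\epsilon-\tfrac{C_{0}}{R^{2}}\Bigr)\|\chi_{2}v\|^{2}\le\Bigl(\lambda+\eta^{2}+\tfrac{C_{0}}{R^{2}}\Bigr)\|\chi_{1}v\|^{2}.
\]
Because $\eta^{2}<\ssess({\bf B};\alpha,\tau)-\lambda$ by hypothesis, for $\epsilon$ sufficiently small and $R$ sufficiently large the coefficient on the left stays bounded below by a positive constant. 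Since $\chi_{1}$ is compactly supported in $\mathcal{B}_{2R}$, one has $\|\chi_{1}v\|^{2}\le e^{4\eta R}\|u_{\lambda}\|_{L^{2}(\Secteur_{\alpha})}^{2}$, which gives $\|v\|^{2}\le C\|u_{\lambda}\|^{2}$; re-injecting into the Agmon identity produces $\FQ(v)\le C'\|u_{\lambda}\|^{2}$, as claimed.

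The step I expect to be most delicate is the rigorous justification of the Agmon identity, since the weight $e^{\eta\Phi}$ is unbounded and a priori $v$ is not known to belong to $\dom(\FQ)$. The standard remedy is to work first with the truncated Lipschitz weights $\Phi_{N}:=\min(\Phi,N)$, which are bounded and still satisfy $|\nabla\Phi_{N}|\le 1$, so that $e^{\eta\Phi_{N}}u_{\lambda}\in\dom(\FQ)$ and the computations above are legitimate. The whole argument then delivers an $N$-uniform bound $\FQ(e^{\eta\Phi_{N}}u_{\lambda})\le C\|u_{\lambda}\|^{2}$, after which one passes to the limit $N\to+\infty$ by monotone convergence on each nonnegative term of $\FQ$. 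Everything else is elementary bookkeeping governed by the positive spectral gap $\ssess({\bf B};\alpha,\tau)-\lambda>\eta^{2}$.
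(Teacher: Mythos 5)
Your argument is correct and is precisely the standard Agmon-estimate proof (truncated weight, conjugation identity, IMS partition, Persson's lemma to control the exterior region via the spectral gap $\ssess({\bf B};\alpha,\tau)-\lambda>\eta^{2}$) that the paper itself does not write out but simply delegates to \cite{Bon06} and \cite{BoDauPopRay12}. The only cosmetic point is that the final limit $N\to+\infty$ in the kinetic term is justified by Fatou's lemma rather than literal monotone convergence, which does not affect the conclusion.
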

\begin{proof}
We refer to the standard proof of \cite{Bon06} and \cite{BoDauPopRay12} for this Agmon estimate.
\end{proof}
\begin{prop}
\label{P:limspecess}
We have 
$$\lim_{\tau\to+\infty}\ssess({\bf B};\alpha,\tau)=1 \ . $$
\end{prop}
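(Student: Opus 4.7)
The starting point is the formula
\begin{equation*}
\ssess({\bf B};\alpha,\tau)=\inf_{\xi_{2}\in\R}\bigl(\mudg_{1}(\xi_{2}\cos\gamma+\tau\sin\gamma)+(\xi_{2}\sin\gamma-\tau\cos\gamma)^{2}\bigr)
\end{equation*}
from Proposition \ref{P:specess}, together with the properties of the de Gennes eigenvalue $\mudg_{1}$ recalled in Subsection \ref{SS:demiplan}: it is continuous on $\R$, attains its minimum $\Theta_{0}$ at $\xi_{0}$, tends to $1$ as $\eta\to+\infty$, and tends to $+\infty$ as $\eta\to-\infty$. The working assumption is $\gamma\neq 0$, hence $\sin\gamma>0$. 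The plan is to prove $\limsup\leq 1$ and $\liminf\geq 1$ separately.

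For the upper bound I would use the explicit test value $\xi_{2}=\tau\cos\gamma/\sin\gamma$, which cancels the quadratic term and leaves $\ssess({\bf B};\alpha,\tau)\leq\mudg_{1}(\tau/\sin\gamma)$; the right-hand side converges to $1$ as $\tau\to+\infty$.

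For the lower bound, fix $\epsilon\in(0,1)$ and note that the sublevel set $K_{\epsilon}:=\{\eta\in\R:\mudg_{1}(\eta)\leq 1-\epsilon\}$ is a bounded interval (possibly empty) thanks to the behavior of $\mudg_{1}$ at $\pm\infty$. Suppose some $\xi_{2}$ satisfies $\mudg_{1}(\xi_{2}\cos\gamma+\tau\sin\gamma)+(\xi_{2}\sin\gamma-\tau\cos\gamma)^{2}<1-\epsilon$; then both summands are individually less than $1-\epsilon$, and in particular $u:=\xi_{2}\cos\gamma+\tau\sin\gamma$ must lie in the bounded set $K_{\epsilon}$. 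If $\cos\gamma=0$ this reads $\tau\in K_{\epsilon}$, absurd for $\tau$ large. If $\cos\gamma>0$, solving $\xi_{2}=(u-\tau\sin\gamma)/\cos\gamma$ and substituting yields
\begin{equation*}
(\xi_{2}\sin\gamma-\tau\cos\gamma)^{2}=\frac{(u\sin\gamma-\tau)^{2}}{\cos^{2}\gamma},
\end{equation*}
which diverges to $+\infty$ as $\tau\to+\infty$ because $u$ stays bounded — contradicting the bound on the quadratic term. Hence $\ssess({\bf B};\alpha,\tau)\geq 1-\epsilon$ for $\tau$ large, and letting $\epsilon\to 0$ completes the lower bound.

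The argument is short rather than delicate. The key mechanism is a competition between the two summands in the formula of Proposition \ref{P:specess}: keeping the de Gennes term close to $1$ constrains its argument to a bounded window, and the algebraic identity linking the two shifts then forces the quadratic term to explode. The only real risk is a computational slip in the substitution step, but no conceptual obstacle.
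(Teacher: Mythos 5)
Your proof is correct and follows essentially the same route as the paper: the same test value $\xi_{2}=\tau\cot\gamma$ for the upper bound, and for the lower bound the same competition between the two terms in the formula of Proposition \ref{P:specess} — if the quadratic term is small then the de Gennes argument is pinned near $\tau/\sin\gamma$ and the first term is close to $1$. The paper phrases this directly by splitting $\xi_{2}$ into a fixed-width window around $\tau\cot\gamma$ and its complement and invoking the monotonicity of $\mudg_{1}$ on $(\xi_{0},+\infty)$, whereas you argue contrapositively via boundedness of the sublevel sets of $\mudg_{1}$; the two mechanisms are interchangeable.
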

\begin{proof}
For $\tau\geq0$, we take $\xi_{2}=\tau\cot \gamma$ in \eqref{F:infsurxi2} and we get 
$$\ssess({\bf B};\alpha,\tau) \leq \mudg_{1}(\tfrac{\tau}{\sin\gamma})<1 \ . $$
For the lower bound, we use \eqref{F:infsurxi2} and we make the distinction between two zones for $\xi_{2}$: 
\begin{itemize}
\item If $\xi_{2}\notin[\tau\cot\gamma-\tfrac{1}{\sin\gamma},\tau\cot\gamma+\tfrac{1}{\sin\gamma}]$, we get $(\xi_{2}\sin\gamma-\tau\cos\gamma)^2 \geq 1$.
\item If $\xi_{2}\in[\tau\cot\gamma-\tfrac{1}{\sin\gamma},\tau\cot\gamma+\tfrac{1}{\sin\gamma}]$, we have $\xi_{2}\cos\gamma+\tau\sin\gamma\in I_{\tau}$ with $I_{\tau}=[\frac{\tau}{\sin\gamma}-\cot\gamma,\frac{\tau}{\sin\gamma}+\cot\gamma]$. For $\tau$ large enough we have $I_{\tau}\subset (\xi_{0},+\infty)$. Since $\mudg_{1}$ is increasing on $(\xi_{0},+\infty)$, we get $\tau_{0}>0$ such that for all $\tau>\tau_{0}$: 
$$\forall \xi_{2}\in [\tau\cot\gamma-\tfrac{1}{\sin\gamma},\tau\cot\gamma+\tfrac{1}{\sin\gamma}], \quad \mudg_{1}(\xi_{2}\cos\gamma+\tau\sin\gamma) \geq \mudg_{1}(\tfrac{\tau}{\sin\gamma}-\cot\gamma) \ . $$
We conclude by using \eqref{F:infsurxi2} and the fact that $\mudg_{1}(\tau)$ tends to 1 as $\tau$ goes to $+\infty$.
\end{itemize}
\end{proof}
\begin{theo}
\label{T:majorationpartheta0}
Let ${\bf B}$ a magnetic field tangent to a face of the wedge $\Wedge_{\alpha}$. We have 
$$\sd({\bf B};\Wedge_{\alpha}) \leq \Theta_{0}\ .$$
\end{theo}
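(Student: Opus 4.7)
The plan is to bound $\sd({\bf B};\Wedge_{\alpha})$ above by combining the F-principle \eqref{E:relationfond} with the explicit formula for the bottom of the essential spectrum furnished by Proposition \ref{P:specess}. Since the bottom of the full spectrum never exceeds the bottom of the essential spectrum, we have $\sse({\bf B};\alpha,\tau) \leq \ssess({\bf B};\alpha,\tau)$ for every $\tau \in \R$, so
$$\sd({\bf B};\Wedge_{\alpha}) = \inf_{\tau \in \R} \sse({\bf B};\alpha,\tau) \;\leq\; \inf_{\tau \in \R} \ssess({\bf B};\alpha,\tau),$$
and it suffices to show the right-hand side is at most $\Theta_{0}$.

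I would split into two cases according to whether the magnetic field is tangent to the edge or not. For $\gamma = 0$ (the field is along the axis of the wedge), we have $\B = 0$, and Subsection \ref{SS:opmodelewedgetangent} gives $\sd({\bf B};\Wedge_{\alpha}) = \mu(\alpha)$. Since the essential spectrum of $P_{\Add, \Secteur_{\alpha}}$ equals $[\Theta_{0}, +\infty)$, we get $\mu(\alpha) = \inf \spec(P_{\Add, \Secteur_{\alpha}}) \leq \inf \specess(P_{\Add, \Secteur_{\alpha}}) = \Theta_{0}$, and the conclusion holds.

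For the generic case $\gamma \in (0, \tfrac{\pi}{2}]$, Proposition \ref{P:specess} yields
$$\ssess({\bf B};\alpha,\tau) = \inf_{\xi_{2} \in \R}\bigl(\mudg_{1}(\xi_{2}\cos\gamma + \tau\sin\gamma) + (\xi_{2}\sin\gamma - \tau\cos\gamma)^2\bigr).$$
The key observation is that the linear change of variables $u = \xi_{2}\cos\gamma + \tau\sin\gamma$, $v = \xi_{2}\sin\gamma - \tau\cos\gamma$ is a rotation in $\R^2$, hence a bijection between $(\xi_{2},\tau)$ and $(u,v)$. Taking the joint infimum over both parameters thus decouples the two terms:
$$\inf_{\tau \in \R} \ssess({\bf B};\alpha,\tau) = \inf_{(u,v) \in \R^2}\bigl(\mudg_{1}(u) + v^2\bigr) = \inf_{u \in \R} \mudg_{1}(u) = \Theta_{0},$$
the last equality being the very definition of $\Theta_{0}$; the infimum is reached at $v = 0$, $u = \xi_{0}$.

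The argument is essentially immediate once Proposition \ref{P:specess} is available: the conceptual work has already been done in identifying the essential spectrum threshold with a partial infimum involving the de Gennes eigenvalue $\mudg_{1}$, after which the additional minimization over $\tau$ is absorbed by an orthogonal change of coordinates. There is therefore no substantial obstacle; the only mild subtlety is to handle the degenerate orientation $\gamma = 0$ separately, as Proposition \ref{P:specess} explicitly assumes $\gamma \neq 0$.
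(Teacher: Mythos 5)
Your proof is correct and follows essentially the same route as the paper: both rest on the F-principle \eqref{E:relationfond}, the bound $\sse\leq\ssess$, and the formula of Proposition \ref{P:specess}. The only cosmetic difference is that the paper exhibits the explicit minimizer $\tau=\xi_{0}\sin\gamma$ (with $\xi_{2}=\xi_{0}\cos\gamma$) instead of invoking the orthogonal change of variables $(\xi_{2},\tau)\mapsto(u,v)$ --- which, incidentally, has determinant $-1$ and is a reflection rather than a rotation, though this does not affect your argument.
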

\begin{proof}
We choose $\tau=\xi_{0}\sin\gamma$ where $\xi_{0}$ is the unique point where $\mudg_{1}$ reaches its infimum (see subsection \ref{SS:demiplan}). Thanks to the proposition \ref{P:specess} we get $\ssess({\bf B};\alpha,\xi_{0}\sin\gamma)=\mudg_{1}(\xi_{0})=\Theta_{0}$ and we conclude using \eqref{E:relationfond}.
\end{proof}

\section{Limit when the Fourier parameter gets large}
\label{S:Limit}
In this section we investigate the limits of $\sse({\bf B};\alpha,\tau)$ when the Fourier parameter $\tau$ goes to $-\infty$ and $+\infty$. In the special case $\alpha=\frac{\pi}{2}$, Pan has identified these limits as eigenvalues of a model problem on a half-space and has given upper and lower bounds (see \cite{Pan02}). We provide an expression of these limits in the general case using the function $\sigma$ defined in \eqref{D:sigma}.
 Let ${\bf B}$ be a magnetic field of the form \eqref{D:Bcoordpolar}. Since 
$$\lim_{\tau\to-\infty}\left(\min_{(x_{1},x_{2})\in \Secteur_{\alpha}} V_{\B, \, \tau} (x_{1},x_{2}) \right)= +\infty \ , $$
we have from the min-max principle: 
$$ \lim_{\tau\to -\infty} \sse({\bf B};\alpha,\tau)=+\infty \ .$$
When $\tau$ goes to $+\infty$ the situation is much more different: $\Upsilon\cap \Secteur_{\alpha}$ is a half line which makes an angle $\alpha\in (0,\pi)$ with the boundary $\{ x_{2}=-\tan\frac{\alpha}{2}\}$ of $\Secteur_{\alpha}$. Moreover one should expect that any eigenfunction with energy below the essential spectrum is localized near the line $\Upsilon$.  In this situation we expect that $\sse({\bf B};a,\tau)$ tends to a quantity coming from a problem on regular domain when $\tau$ tends to $+\infty$. 
\begin{prop}
\label{P:limittaularge}
Assume that the magnetic field writes ${\bf B}=(\sin\gamma \cos\frac{\alpha}{2},\sin\gamma \sin\frac{\alpha}{2},\cos\gamma)$. Then we have 
$$\lim_{\tau\to+\infty} \sse({\bf B};\alpha,\tau)=\sigma(\beta)$$
with $\beta=\arcsin(\sin\alpha\sin\gamma)$.
\end{prop}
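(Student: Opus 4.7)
The strategy is to compare $P_{\Add,\Secteur_\alpha}+V_{\B,\tau}$ with the half-plane operator $P_{\Add,\Hplow}+V_{\B,\tau}$, whose bottom of spectrum is $\sigma(\beta)$ by Lemma~\ref{L:Opmodeledemiplanlow}. Geometrically, the line $\Upsilon=\{x_1b_2-x_2b_1=\tau\}$ is parallel to the upper face of $\Secteur_\alpha$ and sits at distance $\tau/\sin\gamma$ from it, while the potential satisfies $V_{\B,\tau}(x)=\sin^2\gamma\cdot\dist(x,\Upsilon)^2$. Thus as $\tau\to+\infty$ the zero set of the potential recedes along the lower face of $\Secteur_\alpha$, so the low-energy states should concentrate in a tubular neighborhood of $\Upsilon$ that stays away from the upper face, where locally the sector degenerates into $\Hplow$.

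For the upper bound, fix $\epsilon>0$ and, using the min-max principle and Lemma~\ref{L:Opmodeledemiplanlow}, choose $u_\epsilon\in\mathcal{C}_0^\infty(\overline{\Hplow})$ such that $\langle(P_{\Add,\Hplow}+V_{\B,0})u_\epsilon,u_\epsilon\rangle<(\sigma(\beta)+\epsilon)\|u_\epsilon\|^2$. The key computation is to exhibit a translation vector that simultaneously preserves $\Hplow$ and converts $V_{\B,0}$ into $V_{\B,\tau}$. Setting
\[
\vec v_\tau:=\frac{\tau}{\sin\gamma\sin\alpha}\,\bigl(\cos\tfrac{\alpha}{2},\,-\sin\tfrac{\alpha}{2}\bigr),
\]
this direction is parallel to the lower face of $\Secteur_\alpha$, so $\Hplow+\vec v_\tau=\Hplow$; and using $b_1=\sin\gamma\cos\tfrac{\alpha}{2}$, $b_2=\sin\gamma\sin\tfrac{\alpha}{2}$ one checks $v_{\tau,1}b_2-v_{\tau,2}b_1=\tau$, so that $V_{\B,0}(\cdot-\vec v_\tau)=V_{\B,\tau}$. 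By the translation principle of Subsection~1.2, the quasi-mode $\tilde u_\tau(x):=e^{\tfrac{ib_3}{2}\,x\wedge\vec v_\tau}u_\epsilon(x-\vec v_\tau)$ satisfies $\langle(P_{\Add,\Hplow}+V_{\B,\tau})\tilde u_\tau,\tilde u_\tau\rangle=\langle(P_{\Add,\Hplow}+V_{\B,0})u_\epsilon,u_\epsilon\rangle$. Since $u_\epsilon$ has compact support, for $\tau$ large enough the translated support $\supp(\tilde u_\tau)=\supp(u_\epsilon)+\vec v_\tau$ is contained in $\Secteur_\alpha$ (it stays below the upper face). Thus $\tilde u_\tau\in\dom(\FQ)$ with $\FQ(\tilde u_\tau)=\langle(P_{\Add,\Hplow}+V_{\B,\tau})\tilde u_\tau,\tilde u_\tau\rangle<(\sigma(\beta)+\epsilon)\|\tilde u_\tau\|^2$, whence $\sse({\bf B};\alpha,\tau)\le\sigma(\beta)+\epsilon$ and $\limsup_{\tau\to+\infty}\sse({\bf B};\alpha,\tau)\le\sigma(\beta)$.

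For the lower bound, fix $M>0$ and choose a smooth partition of unity $\chi_1^2+\chi_2^2=1$ on $\R^2$ with $\chi_1\equiv1$ on $\{\dist(x,\Upsilon)\le M\}$, $\chi_1\equiv0$ outside $\{\dist(x,\Upsilon)\le 2M\}$, and $|\nabla\chi_j|\le C/M$. For $\tau>2M\sin\gamma$, the strip $\{\dist(x,\Upsilon)\le 2M\}$ stays strictly below the upper face of $\Secteur_\alpha$, so for any $u\in\dom(\FQ)$ the function $\chi_1 u$, extended by zero, belongs to the form domain of $P_{\Add,\Hplow}+V_{\B,\tau}$. The IMS localization formula yields
\[
\FQ(u)=\FQ(\chi_1 u)+\FQ(\chi_2 u)-\sum_{j=1}^{2}\|u\,\nabla\chi_j\|^2\ge\FQ(\chi_1 u)+\FQ(\chi_2 u)-\frac{2C^2}{M^2}\|u\|^2.
\]
Lemma~\ref{L:Opmodeledemiplanlow} and the min-max principle give $\FQ(\chi_1 u)\ge\sigma(\beta)\|\chi_1 u\|^2$; on $\supp(\chi_2)$ we have $V_{\B,\tau}\ge\sin^2\gamma\,M^2$, hence $\FQ(\chi_2 u)\ge\sin^2\gamma\,M^2\|\chi_2 u\|^2$. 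Taking $M\ge\sqrt{\sigma(\beta)}/\sin\gamma$ and summing yields $\FQ(u)\ge(\sigma(\beta)-2C^2/M^2)\|u\|^2$, so $\sse({\bf B};\alpha,\tau)\ge\sigma(\beta)-2C^2/M^2$ for all $\tau>2M\sin\gamma$. Passing to $\liminf_{\tau\to+\infty}$ and then letting $M\to+\infty$ concludes.

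The main obstacle is identifying the translation vector $\vec v_\tau$ in the upper bound: it must be parallel to one specific face of the sector while also producing the correct shift of $V_{\B,0}$ into $V_{\B,\tau}$; that both conditions can be met by a single vector is a non-trivial geometric feature of the tangency configuration. The lower bound, by contrast, is a routine IMS argument provided the two limits are taken in the correct order, with $\tau$ sent to $+\infty$ before $M$.
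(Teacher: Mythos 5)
Your proof is correct. The upper bound is essentially the paper's argument: the paper translates a compactly supported near-minimizer of $P_{\Add,\,\Hplow}+V_{\B,\,\tau}$ by $\tau\,{\bf t}_{\alpha}^{-}$ with ${\bf t}_{\alpha}^{-}=(\cos\frac{\alpha}{2},-\sin\frac{\alpha}{2})$, relying on the $\tau$-independence of the half-plane spectrum from Lemma \ref{L:Opmodeledemiplanlow}, whereas you normalize the shift to $\frac{\tau}{\sin\gamma\sin\alpha}{\bf t}_{\alpha}^{-}$ so that $V_{\B,\,0}(\cdot-\vec v_\tau)=V_{\B,\,\tau}$ exactly; these are the same idea. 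The lower bound is where you genuinely diverge. The paper first needs $\sse({\bf B};\alpha,\tau)$ to be a discrete eigenvalue for large $\tau$ (via Proposition \ref{P:limspecess} and $\sigma(\beta)<1$), which forces it to exclude $\alpha=\frac{\pi}{2}$ and defer that case to Pan; it then localizes an eigenfunction $u_\tau$ with cutoffs at scale $\tau$ around $\Upsilon$ and uses the divergence of $V_{\B,\,\tau}$ on the outer supports to show the mass concentrates on the middle piece. You instead prove a quantitative quadratic-form inequality valid for \emph{every} $u$ in the form domain: a two-piece IMS at a fixed width $M$ around $\Upsilon$, using $V_{\B,\,\tau}\geq\sin^2\gamma\,M^2$ on the outer piece and the half-plane comparison on the inner piece (legitimate since for $\tau>2M\sin\gamma$ the strip of width $2M$ around $\Upsilon$ avoids the upper face), yielding $\sse({\bf B};\alpha,\tau)\geq\sigma(\beta)-2C^2/M^2$ before sending $\tau\to+\infty$ and then $M\to+\infty$. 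This buys you two things: no appeal to existence of eigenfunctions (hence no case distinction at $\alpha=\frac{\pi}{2}$ and no citation of Pan), and an explicit rate in $M$. The only caveats, shared with the paper, are that the argument implicitly requires $\gamma\neq0$ (you divide by $\sin\gamma$, and indeed the statement fails for $\gamma=0$ where $\sse\to+\infty$), and that one should check, as you do, that the zero extension of $\chi_1 u$ lies in the form domain on $\Hplow$ because $\supp\chi_1$ stays at positive distance from the upper face.
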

\begin{proof}
Thanks to Lemma \ref{L:Opmodeledemiplanlow}, for $\epsilon>0$ it exists $u_{\epsilon}\in \mathcal{C}_{0}^{\infty}(\overline{\Hplow})\cap \dom(P_{\Add,\, \Hplow}+ V_{\B, \, \tau})$ such that $\langle (P_{\Add,\, \Hplow}+ V_{\B, \, \tau})u_{\epsilon},u_{\epsilon} \rangle_{L^2(\Hplow)} < \sigma(\beta)+\epsilon$. We construct the test function
$$v_{\epsilon, \, \tau}(x):=e^{i\frac{\tau}{2} x \wedge {\bf t}_{\alpha}^{-}}u_{\epsilon}(x-\tau{\bf t}_{\alpha}^{-}) \ ,
$$
where ${\bf t}_{\alpha}^{-}=(\cos\frac{\alpha}{2},-\sin\frac{\alpha}{2})$ is the direction of the lower boundary of $\Secteur_{\alpha}$. For $\tau$ large enough, we have $\supp(v_{\epsilon, \, \tau}) \subset \Secteur_{\alpha}$ and thus $v_{\epsilon, \,  \tau}\in \dom(P_{\Add,\, \Secteur_{\alpha}}+ V_{\B, \, \tau})$. From the translation principle we get 
$$ \langle (P_{\Add,\, \Secteur_{\alpha}}+ V_{\B, \, \tau})v_{\epsilon, \, \tau},v_{\epsilon, \, \tau} \rangle_{L^2(\Secteur_{\alpha})}=\langle (P_{\Add,\, \Hplow}+ V_{\B, \, \tau})u_{\epsilon},u_{\epsilon} \rangle_{L^2(\Hplow)} < \sigma(\beta)+\epsilon  $$
and we deduce from the min-max principle that 
\begin{equation}
\label{E:majorationsenpinfini}
\limsup_{\tau\to+\infty}\sse({\bf B};\alpha,\tau) \leq \sigma(\beta) \ .
\end{equation} 
When $\alpha=\frac{\pi}{2}$ the proposition has already been proved in \cite{Pan02}. We now suppose that $\alpha\neq\frac{\pi}{2}$ and thus $\beta\neq\frac{\pi}{2}$. Using Proposition \ref{P:limspecess} and the fact that $\forall  \beta\in(0,\frac{\pi}{2}), \sigma(\beta) < 1$, we get that for $\tau$ large enough, $\sse({\bf B};\alpha,\tau)$ is an eigenvalue of $P_{\Add,\, \Secteur_{\alpha}}+ V_{\B, \, \tau}$ with finite multiplicity. We denote by $u_{\tau}$ an associated eigenfunction. To establish a lower bound for $\sse({\bf B};\alpha,\tau)$, we use the concentration of the  eigenfunctions near the line $\Upsilon$ and an IMS formula.  Let $(\chi_{j})_{j\in\{1,2,3\}}\in C^{\infty}(\R)$ such that $0 \leq \chi_{j} \leq 1$ and
\begin{equation*}
\left \{
\begin{aligned}
&\chi_{1}=1 \mbox{ on } (-\infty,-\tfrac{1}{2}] \ \mbox{ and } \  \chi_{1}=0 \mbox{ on } [-\tfrac{1}{4},+\infty) \ ,
\\
&\chi_{2}=1 \mbox{ on } (-\tfrac{1}{4},\tfrac{1}{4}] \ \mbox{ and } \  \chi_{2}=0 \mbox{ on } (-\infty,-\tfrac{1}{2}]\cup [\tfrac{1}{2},+\infty) \ ,
\\
&\chi_{3}=0 \mbox{ on } (-\infty,\tfrac{1}{4}] \ \mbox{ and } \  \chi_{3}=1 \mbox{ on } [\tfrac{1}{2},+\infty) \ ,
\\
&\sum_{j=1}^3\chi_{j}^2=1 \ .
\end{aligned}
\right.
\end{equation*}
We define for $j\in\{1,2,3\}$:
$$\chi_{j,\tau}(x_{1},x_{2}):=\chi_{j}\left(\tau^{-1}(x_{1}b_{2}-x_{2}b_{1}-\tau)\right) \ . $$
 Since the magnetic field is non tangent to the edge, $b_{1}$ or $b_{2}$ is non-zero and it exists $C>0$ and $\tau_{1}>0$ such that 
\begin{equation}
\forall \tau \geq \tau_{1}, \, \forall j\in\{1,2,3\},\,  \forall (x_{1},x_{2})\in \Secteur_{\alpha}, \quad |\nabla \chi_{j,\, \tau}(x_{1},x_{2})|^2 \leq \frac{C}{\tau^2} \ . 
\end{equation}
Using the IMS formula we get: 
$$\FQ(u_{\tau})=\sum_{j}\FQ(\chi_{j,\, \tau}u_{\tau})-\sum_{j}\| \nabla\chi_{j,\, \tau}u_{\tau}\|^2  \ .$$
Let $\epsilon>0$. It exists $\tau_{1}$ such that we have 
\begin{equation}
\label{m:sanslescutoff}
\forall \tau \geq \tau_{1}, \quad \FQ(u_{\tau}) \geq \FQ(\chi_{2,\, \tau}u_{\tau})-3\epsilon \ . 
\end{equation}
Since $\supp(\chi_{2,\, \tau})\cap \partial\Secteur_{\alpha}\subset \{x_{2}=-x_{1}\tan\frac{\alpha}{2}\}$, we extend $\chi_{2, \, \tau}u_{\tau}$ to a function of $\dom\left(P_{\Add,\, \Hplow}+ V_{\B, \, \tau}\right)$ which satisfy the Neumann boundary condition by taking the value 0 outside $\supp(\chi_{2,\, \tau}u_{\tau})$. Therefore using Lemma \ref{L:Opmodeledemiplanlow} we get 
\begin{equation}
\label{m:energieprincipale}
\FQ(\chi_{2,\, \tau}u_{\tau})=\langle (P_{\Add,\, \Hplow}+ V_{\B, \, \tau})\chi_{2, \, \tau}u_{\tau},\chi_{2, \, \tau}u_{\tau} \rangle_{L^2(\Hplow)} \geq \sigma(\beta) \| \chi_{2,\, \tau}u_{\tau}\|^2_{L^2(\Hplow)} \ . 
\end{equation}
 For $\tau$ large enough we have from \eqref{E:majorationsenpinfini}:
\begin{equation}
\label{M:energiecinetique}
\int_{\supp(\chi_{j,\, \tau})} V_{\B, \, \tau}|u_{\tau}|^2\d x_{1} \d x_{2} \leq \FQ(u_{\tau}) \leq \sigma(\beta)+\epsilon  \ .
\end{equation}
When $j\in\{1,3\}$, $\supp(\chi_{j})\cap \Secteur_{\alpha}\subset \{ x\in \Secteur_{\alpha}, \dist(x,\Upsilon) \geq \frac{\tau}{2}\}$. 
We deduce
$$\forall j\in \{1,3\}, \quad \lim_{\tau\to+\infty} \left(\inf_{\supp(\chi_{j,\,\tau})} V_{\B, \, \tau}\right)=+\infty $$
and due to \eqref{M:energiecinetique}:
$$\forall j\in \{1,3\}, \quad  \lim_{\tau\to+\infty} \int_{\supp(\chi_{j, \, \tau})}|u_{\tau}|^2\d x_{1} \d x_{2}=0 \ . $$
We deduce that it exists $\tau_{2}>\tau_{1}$ such that  
$$\forall \tau \geq \tau_{2}, \, \forall j\in \{1,3 \},\quad \| \chi_{j, \, \tau}u_{\tau}\|^2_{L^2(\Secteur_{\alpha})} \leq \epsilon \ $$
and using that $\|u_{\tau}\|_{L^2(\Secteur_{\alpha})}=1$: 
$$\forall \tau \geq \tau_{2}, \quad \| \chi_{2, \, \tau}u_{\tau} \|^2_{L^2(\Secteur_{\alpha})} \geq 1-2\epsilon \ . $$ 
Using \eqref{m:sanslescutoff} and \eqref{m:energieprincipale} we get for $\tau \geq \tau_{2}$:
$$\FQ(u_{\tau}) \geq \sigma(\beta)(1-2\epsilon)-3\epsilon \ .$$
Since $u_{\tau}$ is normalized, we get $$\liminf_{\tau\to+\infty}\sse({\bf B};\alpha,\tau) \geq \sigma(\beta)  $$
and the proposition is proved.
\end{proof}
Using Theorem \ref{T:majorationpartheta0}, we deduce the following: 
\begin{cor}
\label{C:minatteint}
Assume that the magnetic field writes ${\bf B}=(\sin\gamma \cos\frac{\alpha}{2},\sin\gamma \sin\frac{\alpha}{2},\cos\gamma)$. Then the function $\tau\mapsto\sse({\bf B};\alpha,\tau)$ reaches its infimum.
\end{cor}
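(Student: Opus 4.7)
The plan is to deduce the corollary from the combination of continuity of the band function, its behaviour at $\pm\infty$, and the uniform upper bound from Theorem \ref{T:majorationpartheta0}. Specifically, I would first recall that $\tau\mapsto\sse({\bf B};\alpha,\tau)$ is continuous on $\R$ (this was noted earlier as a consequence of Kato's perturbation theory applied to the family $\FQ$, whose form domain does not depend on $\tau$). Then I would show that outside a sufficiently large compact interval the function lies strictly above the value $\Theta_0$, while on the inside it takes a value $\leq\Theta_0$.

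I would distinguish two cases. The edge-tangent case $\gamma=0$ is immediate from Subsection \ref{SS:opmodelewedgetangent}: the formula $\sse({\bf B};\alpha,\tau)=\mu(\alpha)+\tau^2$ is explicitly minimized at $\tau=0$. For $\gamma\in(0,\frac{\pi}{2}]$, the angle $\beta=\arcsin(\sin\alpha\sin\gamma)$ lies in $(0,\frac{\pi}{2}]$ since $\alpha\in(0,\pi)$; by the strict monotonicity of $\sigma$ on $(0,\frac{\pi}{2}]$ recalled in Subsection \ref{SS:demiplan}, this gives $\sigma(\beta)>\Theta_0$.

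Now Proposition \ref{P:limittaularge} provides
\[
\lim_{\tau\to+\infty}\sse({\bf B};\alpha,\tau)=\sigma(\beta),
\]
while the observation at the start of Section \ref{S:Limit} gives $\lim_{\tau\to-\infty}\sse({\bf B};\alpha,\tau)=+\infty$. Fix $\delta\in(0,\sigma(\beta)-\Theta_0)$. By these two limits there exists $M>0$ such that
\[
|\tau|>M \implies \sse({\bf B};\alpha,\tau)>\Theta_0+\tfrac{\delta}{2}.
\]
On the other hand, Theorem \ref{T:majorationpartheta0} combined with the F-principle \eqref{E:relationfond} yields $\inf_{\tau\in\R}\sse({\bf B};\alpha,\tau)\leq\Theta_0<\Theta_0+\frac{\delta}{2}$. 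Therefore the infimum over $\R$ coincides with the infimum over the compact interval $[-M,M]$, where by continuity it is attained.

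The only subtle point---which is really the main ingredient rather than an obstacle---is the strict inequality $\sigma(\beta)>\Theta_0$, which is what forces the infimum to be reached rather than merely approached at $+\infty$. In the degenerate situation $\gamma=0$ this argument would break down (because then $\sigma$-type limits are absent), but that case is handled separately by the explicit formula mentioned above. No further delicate estimate is needed.
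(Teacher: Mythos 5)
Your argument is correct and is precisely the one the paper intends: the corollary is stated immediately after Proposition \ref{P:limittaularge} with the remark ``Using Theorem \ref{T:majorationpartheta0}, we deduce the following,'' and the intended reasoning is exactly your combination of continuity of the band function, divergence as $\tau\to-\infty$, the limit $\sigma(\beta)>\Theta_{0}$ as $\tau\to+\infty$, and the bound $\inf_{\tau}\sse({\bf B};\alpha,\tau)\leq\Theta_{0}$. Your separate treatment of the edge-tangent case $\gamma=0$ via the explicit formula $\sse({\bf B};\alpha,\tau)=\mu(\alpha)+\tau^{2}$ is a worthwhile precision, since the $\sigma(\beta)$ limit argument indeed degenerates there.
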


\section{Rough upper bounds}
\label{S:ub}
In this Section we provide an upper bound for $\spectre({\bf B};\Wedge_{\alpha})$ using quasi-modes from \cite{Bon06}.
\begin{prop}
Assume that the magnetic field writes ${\bf B}=(\sin\gamma\cos\frac{\alpha}{2},\sin\gamma\sin\frac{\alpha}{2},\cos\gamma)$. Then
\begin{equation}
\label{E:lowerbound}
\forall (\alpha,\gamma)\in (0,\pi)\times [0,\tfrac{\pi}{2}], \quad \spectre({\bf B};\Wedge_{\alpha})\leq \alpha \left(\frac{1}{\sqrt{3}}+\frac{\sqrt{3}}{2}\sin^2\gamma \right) \ .
 \end{equation}
\end{prop}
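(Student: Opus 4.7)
The strategy is to bound $\sd(\B;\Wedge_\alpha)$ via the F-principle \eqref{E:relationfond} by exhibiting a single quasi-mode in $\dom(\FQ)$ for a convenient Fourier parameter. I would set $\tau=0$, so that the reduced electric potential on $\Secteur_\alpha$ becomes $V_{\B,0}(x_1,x_2)=\sin^2\gamma\,(x_1\sin\tfrac{\alpha}{2}-x_2\cos\tfrac{\alpha}{2})^2$, and work in the Landau gauge $\Add=(-\cos\gamma\, x_2,0)$ satisfying $\curl\Add=\cos\gamma$. In polar coordinates $(r,\theta)$ with $\theta\in(-\tfrac{\alpha}{2},\tfrac{\alpha}{2})$ this potential takes the form $\sin^2\gamma\, r^2\sin^2(\tfrac{\alpha}{2}-\theta)$.

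Following the small-angle quasi-modes of \cite{Bon06}, I would take the isotropic Gaussian $v(x_1,x_2)=e^{-\beta r^2/2}$ with $\beta>0$ to be optimized. Expanding $(-i\nabla-\Add)v$ in Cartesian components gives $|(-i\nabla-\Add)v|^2=(\beta^2 r^2+\cos^2\gamma\,r^2\sin^2\theta)\,|v|^2$. Integrating the magnetic plus electric energies against the polar measure $r\,\d r\d\theta$, and using the identities $\int_0^\infty r^3 e^{-\beta r^2}\d r=1/(2\beta^2)$, $\int_0^\infty r e^{-\beta r^2}\d r=1/(2\beta)$, $\int_{-\alpha/2}^{\alpha/2}\sin^2\theta\,\d\theta=\tfrac{\alpha-\sin\alpha}{2}$, and $\int_{-\alpha/2}^{\alpha/2}\sin^2(\tfrac{\alpha}{2}-\theta)\,\d\theta=\tfrac{2\alpha-\sin 2\alpha}{4}$, reduces the Rayleigh quotient to the form $R(\beta)=\beta+N(\alpha,\gamma)/(\alpha\beta)$ with $N(\alpha,\gamma)=\tfrac{\cos^2\gamma}{2}(\alpha-\sin\alpha)+\tfrac{\sin^2\gamma}{4}(2\alpha-\sin 2\alpha)$.

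Minimizing over $\beta=\sqrt{N/\alpha}$ yields $R_{\min}^2=4N/\alpha=2\cos^2\gamma(1-\tfrac{\sin\alpha}{\alpha})+2\sin^2\gamma(1-\tfrac{\sin 2\alpha}{2\alpha})$. The elementary inequality $\sin x\geq x-\tfrac{x^3}{6}$ for $x\geq 0$, which follows by checking that $f(x)=\sin x-x+\tfrac{x^3}{6}$ satisfies $f(0)=f'(0)=0$ and $f''(x)=x-\sin x\geq 0$, translates to $1-\tfrac{\sin x}{x}\leq \tfrac{x^2}{6}$; applying it at $x=\alpha$ and $x=2\alpha$ gives $R_{\min}^2\leq \tfrac{\alpha^2}{3}(1+3\sin^2\gamma)$. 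Taking the square root and using $\sqrt{1+y}\leq 1+y/2$ with $y=3\sin^2\gamma$ yields $R_{\min}\leq \alpha\bigl(\tfrac{1}{\sqrt{3}}+\tfrac{\sqrt{3}}{2}\sin^2\gamma\bigr)$, and the F-principle concludes.

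The subtle point is the choice of quasi-mode. A 1D-type ansatz $e^{icx_1x_2}f(x_1)$ with a Gaussian $f$, although consistent with the Bonnaillie asymptotic $\mu(\alpha)\sim\alpha/\sqrt{3}$, naturally produces a bound controlled by $\tan\tfrac{\alpha}{2}$ rather than by $\alpha$, and does not deliver the announced estimate uniformly on $(0,\pi)$. The isotropic 2D Gaussian adapted to the polar geometry of $\Secteur_\alpha$ is the correct choice, and once this is fixed the remainder of the proof is a routine computation combined with the elementary trigonometric bound above.
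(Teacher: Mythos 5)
Your proof is correct and takes essentially the same route as the paper: fix $\tau=0$, test the quadratic form $\FQ$ on an isotropic Gaussian quasi-mode, optimize, and conclude via the F-principle \eqref{E:relationfond}. The only differences are cosmetic — you compute directly in the Landau gauge and optimize the Gaussian width $\beta$ explicitly, whereas the paper passes to normalized polar coordinates and inserts the specific Gaussian $u_{\alpha}$ from \cite{Bon06}; your final bound $1-\sinc x\le \tfrac{x^2}{6}$ applied at $x=\alpha$ and $x=2\alpha$ plays exactly the role of the paper's estimates $\sinc\tfrac{\alpha}{2}\le 1$ and $0\le\tfrac{1-\sinc\alpha}{\alpha^2}\le\tfrac16$.
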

\begin{proof} 
We set $\tau=0$ and we make several standard transformations in the quadratic form $\FQ$ in order to study a quadratic form on a domain independent from $\alpha$  (see \cite[Section 3]{Bon06} or \cite[Section 5.1]{Tbon} for the details). We start with a change of variables associated with the polar coordinates $(\rho,\phi)\in \Omega^{\alpha}$ with $\Omega^{\alpha}=\R_{+}\times (-\frac{\alpha}{2},\frac{\alpha}{2})$ and we are led to the quadratic form
$$v\mapsto \int_{\Omega^{\alpha}} \left(|\partial_{\rho}v|^2+\frac{1}{\rho^2}|(\partial_{\phi}+i\frac{\rho^2}{2})v|^2+\Vpol_{\B} |v|^2\right)\rho \d \rho \d \phi $$
 with
  \begin{equation}
\label{}
\Vpol_{{\underline{\bf B}}}(\rho,\eta):=\big(\rho\cos(\eta\alpha)b_{2}-\rho\sin(\eta\alpha)b_{1}\big)^2 \ .
\end{equation}
the electric potential in polar coordinates
We make the change of gauge 
$$ u(\rho,\phi):=e^{i\frac{\rho^2}{2}\phi}v(\rho,\phi)$$
and we normalize the angle with the scaling $\eta=\frac{\phi}{\alpha}$. Using these transformations  we get that for $\tau=0$ the quadratic form $\FQ$ is unitary equivalent to the quadratic form
\begin{equation}
\label{D:FQpol}
\FQpol(u):=\int_{\Omega_{0}}\left(|(\partial_{\rho}-i\alpha \rho \eta b_{3})u|^2+\frac{1}{\alpha^2 \rho^2}|\partial_{\eta}u|^2+\Vpol_{\underline{\bf B}}|u|^2\right)\rho \d \rho \d \eta 
\end{equation}
  with $\Omega_{0}=\R_{+}\times (-\frac{1}{2},\frac{1}{2})$.  
The form domain is 
\begin{equation*}
\label{Domaine_fqpol}
\dom(\FQpol)=
\left\{ u\in L^2_{\rho}(\Omega_{0}), \, (\partial_{\rho}-i\alpha \rho \eta b_{3})u\in L^2_{\rho}(\Omega_{0}),\,  \frac{1}{\rho}\partial_{\eta}u\in L^2_{r}(\Omega_{0}),\,  \sqrt{\Vpol_{\underline{\bf B}}}\, u\in L^2_{\rho}(\Omega_{0}) \right\}
\end{equation*}
where $L^2_{\rho}(\Omega_{0})$ stands for the set of the square-integrable functions for the weight $\rho\d \rho$. Let $B^1_{\rho}(\Rp) :=\{ u\in L^2_{\rho}(\R^+), u'\in L^2_{\rho}(\R^+), \rho u \in L^2_{\rho}(\R^+)\}$.
We have an injection from $B^1_{\rho}(\Rp)$ into $\dom(\FQpol)$, and for $u\in B^1_{\rho}(\Rp)$ an elementary computation (see \cite[Proposition 6.26]{Popoff}) yields: 
$$\FQpol(u)=\|u'\|_{L^2_{\rho}(\R_{+})}^2+\left(b_{2}^2+\frac{\alpha^2}{12}b_{3}^2+\frac{1}{2}(1-\sinc \alpha)(b_{1}^2-b_{2}^2) \right)\| \rho u \|_{L^2_{\rho}(\R_{+})}^2 \ ,$$
where $\sinc\alpha:=\frac{\sin\alpha}{\alpha}$. We take the quasimode $u_{\alpha}(\rho,\eta):=3^{-1/4}\exp\left(\frac{-\alpha \rho^2}{4\sqrt{3}}\right)$ coming from \cite{Bon06}. The function $u_{\alpha}$ is in $B_{r}^{1}(\R_{+})$. We get $$ \quad \|u_{\alpha}'\|_{L^2_{\rho}(\R_{+})}^2=\frac{1}{2\sqrt{3}} \quad \mbox{and} \quad \|\rho u_{\alpha}\|_{L^2_{\rho}(\R_{+})}^2=\frac{2\sqrt{3}}{\alpha^2} \ . $$
Using $(b_{1},b_{2},b_{3})=(\sin\gamma\cos\frac{\alpha}{2},\sin\gamma\sin\frac{\alpha}{2},\cos\gamma)$, we get 
$$\FQpol(u_{\alpha})=\frac{1}{2\sqrt{3}}+\frac{\sqrt{3}}{2}\left(\sinc\frac{\alpha}{2}\right)^2\sin^2\gamma+\frac{\cos^2\gamma}{2\sqrt{3}}+\sqrt{3}\frac{1-\sinc\alpha}{\alpha^2}\cos\alpha\sin^2\gamma \ . $$
Since $\sinc \frac{\alpha}{2} \leq 1$ and $0\leq\frac{1-\sinc \alpha}{\alpha^2}\leq \frac{1}{6}$, using $\|u_{\alpha}\|_{L^2_{\rho}(\R_{+})}^2=\frac{1}{\alpha}$ we get from the min-max principle: 
$$\forall \alpha\in (0,\pi], \quad \sse({\bf B};\alpha,0) \leq \alpha \left(\frac{1}{\sqrt{3}}+\frac{\sqrt{3}}{2}\sin^2\gamma \right) \ .$$
We conclude with the relation \eqref{E:relationfond}.
\end{proof}
When the magnetic field is tangent to a face of the wedge, we deduce:
\begin{equation}
\lim_{\alpha\to 0}\spectre({\bf B};\Wedge_{\alpha})=0.
\end{equation}
From Corollary \ref{C:minatteint} we know that the function $\tau\mapsto \sse({\bf B};\alpha,\tau)$ reaches its infimum when ${\bf B}$ is tangent to a face of $\Wedge_{\alpha}$. For $\alpha$ small enough, we are able to characterize the bottom of the spectrum of the operator on the sector, indeed using Proposition \ref{P:specess} and the lower bound \eqref{E:lowerbound} we get:
\begin{cor}
Assume that the magnetic field writes ${\bf B}=(\sin\gamma\cos\frac{\alpha}{2},\sin\gamma\sin\frac{\alpha}{2},\cos\gamma)$ and that $\alpha \left(\frac{1}{\sqrt{3}}+\frac{\sqrt{3}}{2}\sin^2\gamma \right)<\Theta_{0}$. Let  $\tau^{*}$ be a value of the parameter such that $\spectre({\bf B};\Wedge_{\alpha})=\sse({\bf B};\alpha,\tau^{*})$. Then $\sse({\bf B};\alpha,\tau^{*})$ is a discrete eigenvalue for $P_{\Add, \, \Secteur_{\alpha}}+V_{\B, \, \tau^{*}}$.
\end{cor}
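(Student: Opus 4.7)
My plan is to show that under the hypothesis $\alpha\bigl(\frac{1}{\sqrt{3}}+\frac{\sqrt{3}}{2}\sin^2\gamma\bigr)<\Theta_{0}$, the value $\sse({\bf B};\alpha,\tau^{*})$ lies strictly below the essential spectrum $\ssess({\bf B};\alpha,\tau^{*})$; the standard min-max/spectral theory then guarantees it is a discrete eigenvalue.

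First I would invoke the upper bound \eqref{E:lowerbound} together with the F-principle \eqref{E:relationfond}, which yield
\begin{equation*}
\sse({\bf B};\alpha,\tau^{*})=\sd({\bf B};\Wedge_{\alpha})\leq \alpha\Bigl(\tfrac{1}{\sqrt{3}}+\tfrac{\sqrt{3}}{2}\sin^2\gamma\Bigr)<\Theta_{0},
\end{equation*}
where the strict inequality is exactly the assumption of the corollary.

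Next I would produce a lower bound for the essential spectrum. By Proposition \ref{P:specess} combined with Lemma \ref{L:Opmodeledemiplanup}, one has
\begin{equation*}
\ssess({\bf B};\alpha,\tau^{*})=\inf_{\xi_{2}\in\R}\Bigl(\mudg_{1}(\xi_{2}\cos\gamma+\tau^{*}\sin\gamma)+(\xi_{2}\sin\gamma-\tau^{*}\cos\gamma)^{2}\Bigr).
\end{equation*}
Since $\mudg_{1}\geq \Theta_{0}$ on $\R$ (this is the defining property of $\Theta_{0}$ recalled in Subsection \ref{SS:demiplan}) and the second summand is non-negative, this infimum is at least $\Theta_{0}$. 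Combining with the previous display gives
\begin{equation*}
\sse({\bf B};\alpha,\tau^{*})<\Theta_{0}\leq \ssess({\bf B};\alpha,\tau^{*}),
\end{equation*}
so $\sse({\bf B};\alpha,\tau^{*})$ lies strictly below the bottom of the essential spectrum of $P_{\Add,\,\Secteur_{\alpha}}+V_{\B,\,\tau^{*}}$.

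Finally, since the existence of $\tau^{*}$ is provided by Corollary \ref{C:minatteint} and $\sse({\bf B};\alpha,\tau^{*})=\inf\spec\bigl(P_{\Add,\,\Secteur_{\alpha}}+V_{\B,\,\tau^{*}}\bigr)$ is below the essential spectrum, the min-max principle forces this infimum to be attained by a discrete eigenvalue of finite multiplicity. No real obstacle arises here: the content of the corollary is the combination of the quasi-mode upper bound from Section \ref{S:ub} with the uniform lower bound $\mudg_{1}\geq\Theta_{0}$ that appears in the characterisation of $\ssess$ given in Proposition \ref{P:specess}.
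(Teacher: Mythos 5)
Your argument is correct and is exactly the one the paper intends: combine the quasi-mode upper bound \eqref{E:lowerbound} with the identity of Proposition \ref{P:specess} and the fact that $\mudg_{1}\geq\Theta_{0}$ to place $\sse({\bf B};\alpha,\tau^{*})$ strictly below $\ssess({\bf B};\alpha,\tau^{*})$, whence it is a discrete eigenvalue. No differences worth noting.
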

\begin{rem}
The approximation \eqref{valeur-numerique-theta0} gives a precise set of values for $\alpha$ and $\gamma$ such that the condition in the previous corollary holds.
\end{rem}
\section{Particular case: a magnetic field normal to the edge}
\label{S:champnormal}
We assume here that the magnetic field ${\bf B}$ is tangent to a face and normal to the edge. Therefore its spherical coordinates are $(\gamma,\theta)=(\frac{\pi}{2},\frac{\pi-\alpha}{2})$ and its cartesian coordinates are $(\cos\frac{\alpha}{2},\sin\frac{\alpha}{2},0)$. In that case we have $\Add=0$ and the operator $P_{\Add,\, \Secteur_{\alpha}}+ V_{\B, \, \tau}$ writes 
$ -\Delta+V_{\B, \, \tau}$ with $V_{\B, \, \tau}=(x_{1}\sin\tfrac{\alpha}{2}-x_{2}\cos\tfrac{\alpha}{2}-\tau)^2$.
\begin{prop}
\label{P:monotonie}
Let ${\bf B}$ be a constant magnetic field of spherical coordinates $(\frac{\pi}{2},\frac{\pi-\alpha}{2})$. Then $\alpha\mapsto \sd({\bf B};\Wedge_{\alpha})$ is non-decreasing on $(0,\frac{\pi}{2}]$.
\end{prop}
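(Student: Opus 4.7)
The distinctive feature of this case is that the magnetic field is normal to the edge ($b_{3}=\cos\gamma=0$), which allows the gauge choice $\Add=0$. The operator on the sector then becomes $-\Delta+V_{\B,\tau}$ with $V_{\B,\tau}(x_{1},x_{2})=(x_{1}\sin\tfrac{\alpha}{2}-x_{2}\cos\tfrac{\alpha}{2}-\tau)^{2}$, i.e.\ the squared distance to a line parallel to the upper face of $\Secteur_{\alpha}$. The plan is to straighten this line by a rotation and then compare different openings $\alpha$ via an anisotropic rescaling in the tangential direction.

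First I would pass to the rotated Cartesian coordinates $u:=x_{1}\cos\tfrac{\alpha}{2}+x_{2}\sin\tfrac{\alpha}{2}$ and $w:=x_{1}\sin\tfrac{\alpha}{2}-x_{2}\cos\tfrac{\alpha}{2}$, in which the upper face of $\Secteur_{\alpha}$ coincides with the positive $u$-axis and, uniformly in $\alpha\in(0,\pi/2]$, one has $\Secteur_{\alpha}=\{(u,w):\,w\geq 0,\ u\geq w\cot\alpha\}$. Rotations preserve the Laplacian and $V_{\B,\tau}=(w-\tau)^{2}$, so the quadratic form reads $\FQ(\phi)=\int_{\Secteur_{\alpha}}\bigl(|\partial_{u}\phi|^{2}+|\partial_{w}\phi|^{2}+(w-\tau)^{2}|\phi|^{2}\bigr)\,du\,dw$.

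Next, fix $0<\alpha\leq\alpha'<\pi/2$ and set $c:=\tan\alpha/\tan\alpha'\in(0,1]$. The linear map $T:\Secteur_{\alpha'}\to\Secteur_{\alpha}$, $T(u,w):=(u/c,w)$, is a bijection thanks to the identity $c\cot\alpha=\cot\alpha'$. To any $\phi\in\dom(\FQ)$ on $\Secteur_{\alpha'}$ I would associate the pull-back $\tilde\phi(u',w):=\sqrt{c}\,\phi(cu',w)$ on $\Secteur_{\alpha}$. A direct change-of-variables computation shows that $\|\tilde\phi\|_{L^{2}(\Secteur_{\alpha})}^{2}=\|\phi\|_{L^{2}(\Secteur_{\alpha'})}^{2}$, that the potential term $\int(w-\tau)^{2}|\tilde\phi|^{2}$ is unchanged (since $w$ is not rescaled), and that the kinetic energy satisfies $\int_{\Secteur_{\alpha}}|\nabla\tilde\phi|^{2}=c^{2}\int_{\Secteur_{\alpha'}}|\partial_{u}\phi|^{2}+\int_{\Secteur_{\alpha'}}|\partial_{w}\phi|^{2}\leq\int_{\Secteur_{\alpha'}}|\nabla\phi|^{2}$. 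Consequently $\FQ(\tilde\phi)\leq\FQ(\phi)$ with equal $L^{2}$-norms, so the min-max principle gives $\sse({\bf B};\alpha,\tau)\leq\sse({\bf B};\alpha',\tau)$ for every $\tau\in\R$; taking the infimum in $\tau$ via the F-principle~\eqref{E:relationfond} yields $\sd({\bf B};\Wedge_{\alpha})\leq\sd({\bf B};\Wedge_{\alpha'})$.

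The map $T$ degenerates at $\alpha'=\pi/2$ (where $\tan\alpha'=+\infty$, hence $c\to 0$), so this endpoint must be handled separately: I would invoke Pan's identity $\sd({\bf B};\Wedge_{\pi/2})=\Theta_{0}$ from \cite{Pan02} together with the general upper bound $\sd({\bf B};\Wedge_{\alpha})\leq\Theta_{0}$ of Theorem~\ref{T:majorationpartheta0}, giving $\sd({\bf B};\Wedge_{\alpha})\leq\sd({\bf B};\Wedge_{\pi/2})$ for every $\alpha\in(0,\pi/2]$. The heart of the argument is the right choice of variables: once the upper face is aligned with the $u$-axis, the potential depends only on the transverse coordinate $w$, so the tangential rescaling leaves both $\|\phi\|^{2}$ and $\int V|\phi|^{2}$ invariant while contracting the $u$-component of the kinetic energy by the factor $c^{2}\leq 1$. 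The only genuine obstacle is the endpoint $\alpha'=\pi/2$, which Pan's explicit computation bypasses.
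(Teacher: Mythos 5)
Your proof is correct and follows essentially the same route as the paper: rotate the sector so that the potential depends only on the transverse coordinate, then compare openings by an anisotropic dilation in the tangential direction and conclude with the min-max principle and \eqref{E:relationfond}. The only differences are cosmetic --- the paper rescales every sector onto a single reference domain and reads the monotonicity off the coefficient $(\tan\alpha)^2$, whereas you map $\Secteur_{\alpha'}$ directly onto $\Secteur_{\alpha}$; your explicit handling of the endpoint $\alpha'=\frac{\pi}{2}$ is a detail the paper's proof (stated only on the open interval) leaves implicit.
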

\begin{proof}
Let $\alpha\in (0,\frac{\pi}{2}]$. The operator $P_{\Add,\, \Secteur_{\alpha}}+ V_{\B, \, \tau}$ writes 
$$-\Delta+(x_{1}\sin\tfrac{\alpha}{2}-x_{2}\cos\tfrac{\alpha}{2}-\tau)^2$$
in the sector $\Secteur_{\alpha}$. We denote by $R_{\omega}$ the rotation centered at the origin of angle $\omega$. We make the change of variables $(u_{1},u_{2}):=R_{-\frac{\alpha}{2}}(x_{1},x_{2})$. Since $\alpha\leq\frac{\pi}{2}$, we have $R_{-\frac{\alpha}{2}}(\Secteur_{\alpha})=\{(u_{1},u_{2})\in \R^2,u_{1}>0,  -u_{1}\tan\alpha \leq u_{2} \leq 0 \}$. In these variables the operator $P_{\Add,\, \Secteur_{\alpha}}+ V_{\B, \, \tau}$ becomes 
$$-\Delta+(u_{2}-\tau)^2 \ .$$
We make the dilatation $(v_{1},v_{2})=(-u_{1}\tan\alpha,u_{2})$ and the problem is unitary equivalent to the Neumann realization of
$$-(\tan\alpha)^2\partial_{v_{1}}^2-\partial_{v_{2}}^2+(v_{2}-\tau)^2 $$
in $\{v_{1}>0, v_{1}\leqÊv_{2} \leq 0 \}$. Using the min-max principle, we find that $\sse({\bf B};\alpha,\tau)$ is non-decreasing with $\alpha$ on $(0,\frac{\pi}{2})$ for all $\tau\in\R$. Using \eqref{E:relationfond} we get the proposition.
\end{proof}

The following result was already known by Pan for $\alpha=\frac{\pi}{2}$: 
\begin{theo}
\label{T:egalite}
Let ${\bf B}$ a constant magnetic field of spherical coordinates $(\frac{\pi}{2},\frac{\pi-\alpha}{2})$. Then 
$$\forall \alpha \in \left[\tfrac{\pi}{2},\pi\right], \quad \sd({\bf B};\Wedge_{\alpha})=\Theta_{0} \ . $$
Moreover $\sse({\bf B};\alpha,\tau)=\spectre({\bf B};\Wedge_{\alpha})$ if and only if $\tau=\xi_{0}$, and $\spectre({\bf B};\Wedge_{\alpha})=\ssess({\bf B};\alpha,\xi_{0})$.
\end{theo}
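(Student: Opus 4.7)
The upper bound $\sd({\bf B}; \Wedge_\alpha) \leq \Theta_0$ is already given by Theorem \ref{T:majorationpartheta0}, and Proposition \ref{P:specess} (specialized to $\gamma=\pi/2$) yields $\ssess({\bf B}; \alpha, \tau) = \mudg_1(\tau)$. The heart of the proof is the reverse inequality $\sse({\bf B}; \alpha, \tau) \geq \Theta_0$ for every $\tau\in\R$, which I plan to obtain by a slicing argument after rotating to coordinates in which the potential depends on only one variable.

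First I would apply the rotation $R_{-\alpha/2}$ used in the proof of Proposition \ref{P:monotonie}. Since $\gamma=\pi/2$ gives $\Add=0$ and the operator is simply $-\Delta+V_{\B,\tau}$, in the new variables $(u_1,u_2)=R_{-\alpha/2}(x_1,x_2)$ the potential becomes $V_{\B,\tau}=(u_2+\tau)^2$, depending only on $u_2$. The rotated sector $\widetilde{\Secteur}_\alpha$ lies in the lower half-plane $\{u_2\leq 0\}$, its upper boundary being the positive $u_1$-axis and its lower boundary the ray at angle $-\alpha$. Every non-empty vertical slice $I(u_1)=\{u_2:(u_1,u_2)\in\widetilde{\Secteur}_\alpha\}$ is therefore a half-line $(-\infty,a(u_1))$ with $a(u_1)\leq 0$: explicitly $a(u_1)=0$ for $u_1\geq 0$, and $a(u_1)=-u_1\tan\alpha\leq 0$ for $u_1\leq 0$ (the latter regime arising only when $\alpha\in(\pi/2,\pi]$).

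For the lower bound, I would then drop the non-negative term $|\partial_{u_1}u|^2$ in $\FQ(u)$ and use Fubini to write
\[
\FQ(u)\geq \int_{\R}\left(\int_{I(u_1)}|\partial_{u_2}u|^2+(u_2+\tau)^2|u|^2\,du_2\right)du_1.
\]
On each slice the substitution $s=a(u_1)-u_2$ identifies the inner 1D operator (with the natural Neumann condition at $u_2=a(u_1)$) with the de Gennes operator $\dg_{a(u_1)+\tau}$ on $\R_+$, whose ground-state energy $\mudg_1(a(u_1)+\tau)$ is $\geq\Theta_0$. Integrating gives $\FQ(u)\geq\Theta_0\|u\|^2$, hence $\sse({\bf B};\alpha,\tau)\geq\Theta_0$ for every $\tau$; combined with the upper bound and \eqref{E:relationfond} this yields $\sd({\bf B};\Wedge_\alpha)=\Theta_0$.

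For the characterization of the minimizer, at $\tau=\xi_0$ the sandwich $\Theta_0\leq\sse\leq\ssess=\mudg_1(\xi_0)=\Theta_0$ immediately gives $\sse({\bf B};\alpha,\xi_0)=\Theta_0=\ssess({\bf B};\alpha,\xi_0)$. For $\tau\neq\xi_0$, $\ssess=\mudg_1(\tau)>\Theta_0$, so $\sse=\Theta_0$ would be a discrete eigenvalue with an $L^2$-eigenfunction $u\neq 0$; equality throughout the slicing chain would then force $\partial_{u_1}u\equiv 0$ (so $u(u_1,u_2)=f(u_2)$) together with $\mudg_1(a(u_1)+\tau)=\Theta_0$ on $\supp(u)$. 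Since $a(u_1)=0$ for $u_1>0$ and $\mudg_1(\tau)>\Theta_0$, $u$ would vanish on $\{u_1>0\}$, and combined with $u=f(u_2)$ this gives $u\equiv 0$, a contradiction. The main difficulty I foresee is precisely this strict-inequality step: it combines the $L^2$-structure of the form domain with the fact, recalled in Subsection \ref{SS:demiplan}, that $\mudg_1$ attains its minimum $\Theta_0$ only at $\xi_0$.
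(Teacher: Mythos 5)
Your proof is correct, and for the lower bound it takes a genuinely different route from the paper. The paper rotates the sector by an angle $\omega\in(-\frac{\alpha}{2},-\frac{\pi}{2}+\frac{\alpha}{2})$ chosen so that, because $\alpha\geq\frac{\pi}{2}$, the two orthogonal half-axes $\{u_{1}>0\}$ and $\{u_{2}>0\}$ both lie in the closure of the rotated sector; it then minorizes the operator by a sum of two one-dimensional operators $L_{\rho,u_{2}}+L_{\rho,u_{1}}$, splitting the potential with weights $\rho^{2}$ and $1-\rho^{2}$, applies the de Gennes bound to each, and optimizes over $\rho$ (taking $\rho=\cos(\frac{\alpha}{2}+\omega)$) to recover exactly $\Theta_{0}$. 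You instead rotate so that the potential depends on the single variable $u_{2}$, drop the $\partial_{u_{1}}$ kinetic term, and slice; the hypothesis $\alpha\geq\frac{\pi}{2}$ enters through the fact that every vertical slice of the rotated sector is a half-line $(-\infty,a(u_{1})]$, so each slice operator is a genuine de Gennes operator $\dg_{a(u_{1})+\tau}$ and $\mudg_{1}\geq\Theta_{0}$ applies with no parameter to optimize. (For $\alpha<\frac{\pi}{2}$ some slices become bounded intervals and your bound fails, just as the paper's interval for $\omega$ becomes empty: both arguments degenerate at the same threshold.) Your version buys two things: it is more elementary, and it localizes the inequality finely enough to run the rigidity argument for the ``moreover'' clause --- for $\tau\neq\xi_{0}$ the slices over $u_{1}>0$ have ground energy $\mudg_{1}(\tau)>\Theta_{0}$, which forces a putative eigenfunction at level $\Theta_{0}$ (which exists since $\Theta_{0}<\ssess({\bf B};\alpha,\tau)=\mudg_{1}(\tau)$) to vanish on $\{u_{1}>0\}$ and hence, being independent of $u_{1}$, everywhere. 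The paper's displayed proof establishes only $\sd({\bf B};\Wedge_{\alpha})=\Theta_{0}$ and does not address the characterization of the minimizing $\tau$, so here your argument actually supplies a step the paper leaves out. Two details worth making explicit: a Fubini argument showing that $u(u_{1},\cdot)$ belongs to the one-dimensional form domain for almost every $u_{1}$ before invoking the slice bound, and the observation that each horizontal slice of the rotated sector is a connected half-line meeting $\{u_{1}>0\}$, which is what upgrades ``$u=0$ on $\{u_{1}>0\}$ and $\partial_{u_{1}}u=0$'' to $u\equiv0$.
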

\begin{proof}
The upper bound comes from Theorem \ref{T:majorationpartheta0}. We will provide a lower bound (in the sense of the quadratic forms) for the operator 
$$-\Delta +(x_{1}\cos\tfrac{\alpha}{2}-x_{2}\sin\tfrac{\alpha}{2}-\tau)^2 \ .$$
 Let $\omega\in (0,2\pi)$. Making the change of variables $(u_{1},u_{2})=R_{\omega}(x_{1},x_{2})$, we get that the operator $-\Delta +(x_{1}\cos\frac{\alpha}{2}-x_{2}\sin\frac{\alpha}{2}-\tau)^2$ is unitary equivalent to the Neumann realization of 
$$-\partial_{u_{1}}^2-\partial_{u_{2}}^2+(u_{1}\cos(\tfrac{\alpha}{2}+\omega)-u_{2}\sin(\tfrac{\alpha}{2}+\omega)-\tau)^2, \quad (u_{1},u_{2})\in R_{\omega}(\Secteur_{\alpha}) \ . $$
We introduce two 1D-operators on the half-line with a Neumann boundary condition: 
$$L_{\rho,u_{2}}:=-\partial_{u_{1}}^2+\rho^2(u_{1}\cos(\tfrac{\alpha}{2}+\omega)-u_{2}\sin(\tfrac{\alpha}{2}+\omega)-\tau)^2, \quad u_{1}>0  \ ,$$
and
$$L_{\rho,u_{1}}:=-\partial_{u_{2}}^2+(1-\rho^2)(u_{1}\cos(\tfrac{\alpha}{2}+\omega)-u_{2}\sin(\tfrac{\alpha}{2}+\omega)-\tau)^2, \quad u_{2}>0 \ . $$
Since $\alpha\in[\frac{\pi}{2},\pi]$, we can choose $\omega\in(-\frac{\alpha}{2},-\frac{\pi}{2}+\frac{\alpha}{2})$ such that the two axes $\{u_{1}>0\}$ and $\{u_{2}>0\}$ belong to $\overline{R_{\omega}(\Secteur_{\alpha})}$. Therefore we have (in the sense of quadratic forms): 
$$-\partial_{u_{1}}^2-\partial_{u_{2}}^2+(u_{1}\cos(\tfrac{\alpha}{2}+\omega)-u_{2}\sin(\tfrac{\alpha}{2}+\omega)-\tau)^2 \geq L_{\rho,u_{2}}+L_{\rho,u_{2}} \ . $$
Due to an elementary scaling, we have: 
\begin{equation}
\label{Minop1d}
L_{\rho,u_{2}} \geq \rho\cos(\tfrac{\alpha}{2}+\omega) \Theta_{0} \quad  \mbox{ and } \quad L_{\rho,u_{1}}\geq \sqrt{1-\rho^2}\sin(\tfrac{\alpha}{2}+\omega)\Theta_{0} \ . 
\end{equation}
Therefore we have 
$$\forall \tau \in \R, \quad \sse({\bf B};\alpha,\tau) \geq \rho \cos(\tfrac{\alpha}{2}+\omega) \Theta_{0}+\sqrt{1-\rho^2}\sin(\tfrac{\alpha}{2}+\omega)\Theta_{0} \ . $$
We optimize the lower bound by taking $\rho=\cos(\frac{\alpha}{2}+\omega)$ and using \eqref{E:relationfond} we get $\sd({\bf B};\Wedge_{\alpha}) \geq \Theta_{0}$.
\end{proof}

\section{Numerical simulations}
\label{S:numericalsimu}
Numerically we compute the first eigenpair of the operator $P_{\Add, \, \Secteur_{\alpha}}+V_{\B, \, \tau}$ on the triangle $\mathcal{T}_{\alpha,L}:=\Secteur_{\alpha}\cap \{0 < x_{1}<L \}$ with a Dirichlet condition on the artificial boundary $\{ x_{1}=L\}$.
We use the finite element library M\'elina (\cite{Melina++}) and we refer to \cite[Section 4.4 and Annex C]{Popoff} for more details about the meshes and the degree of the elements we have used. We choose for the magnetic potential $\Add^{\rm R}(x_{1},x_{2}):=(-x_{1}b_{3},0)$.

On figures \ref{F1} and \ref{F2} we take $\alpha=\frac{\pi}{2}$ and a magnetic field of spherical coordinates $(\gamma,\theta)=(\frac{\pi}{10},\frac{\pi}{4})$ tangent to a face of the wedge. The computational domain is $\mathcal{T}_{\frac{\pi}{2},14}$. 

On figure \ref{F1} we show numerical approximation of the band function $\tau\mapsto \sse({\bf B};\alpha,\tau)$. We denote by $\breve{\sse}({\bf B};\alpha,\tau)$ these approximations. We have made the computations for $\tau=\frac{k}{10}$ with $-10\leq k\leq 18$. We have also plotted the bottom of the essential spectrum of the operator $P_{\Add, \, \Secteur_{\alpha}}+V_{\B, \, \tau}$ (according to the relation \eqref{F:infsurxi2}), the constant $\Theta_{0}$ and $\sigma(\beta)$ with $\beta=\arcsin(\sin\gamma\sin\frac{\alpha}{2})$. The numerical approximation of $\sigma(\beta)$ comes from \cite{BoDauPopRay12}.

We observe that $\breve{\sse}({\bf B};\alpha,\tau)<\ssess({\bf B};\alpha,\tau)$ and that  $\tau \mapsto \breve{\sse}({\bf B};\alpha,\tau)$ has a unique minimum. Moreover this minimum is smaller than $\Theta_{0}$. When $\tau$ goes to $+\infty$, $\breve{\sse}({\bf B};\alpha,\tau)$ tends to $\sigma(\beta)$ according to Proposition \ref{P:limittaularge}.

\begin{figure}[ht!]
\hspace{-1cm}
\includegraphics[keepaspectratio=true,width=16cm]{./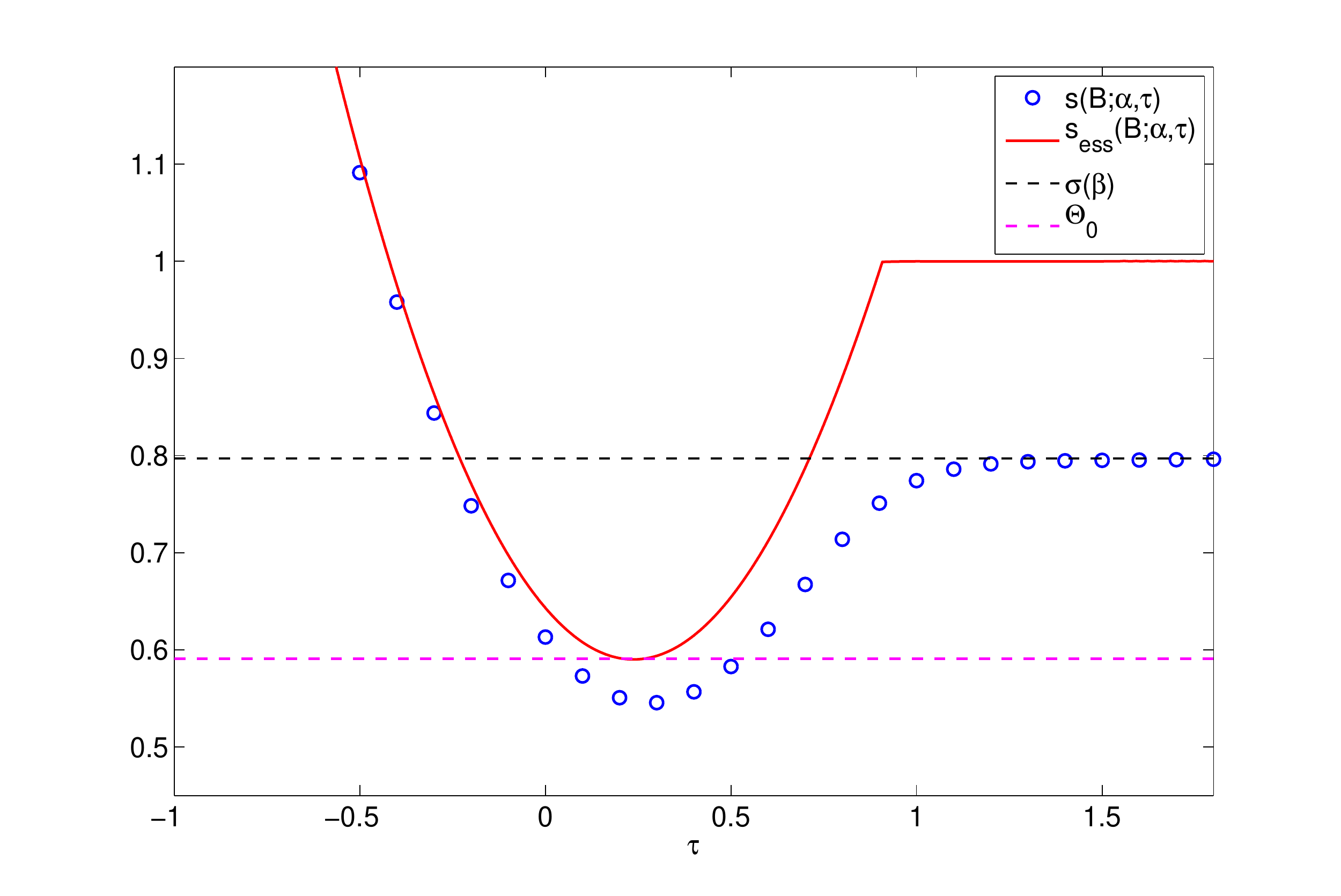}
\caption{Opening angle: $\alpha=\frac{\pi}{2}$. Spherical coordinates of ${\bf B}$: $(\gamma,\theta)=(\frac{\pi}{10},\frac{\pi}{4})$. The approximation $\breve{\sse}(\gamma,\theta;\alpha,\tau)$ with respect to $\tau$ for $\tau=\frac{k}{10}$, $-10\leq k\leq 18$ compared to $\ssess({\bf B};\alpha,\tau)$, $\Theta_{0}$ and $\sigma(\beta)$.  
}
\label{F1}
\end{figure}

On figure \ref{F2} we have plotted the eigenfunctions associated to the values of $\breve{\sse}({\bf B};\alpha,\tau)$ shown in figure \ref{F1} for $\tau=\frac{k}{2}$ with $0 \leq k \leq 4$. From top to bottom we show the modulus, the base-10 logarithm of the modulus and the phases modulo $\pi$ of the eigenfunctions. The logarithm is set to -13 when the value of the modulus is less than $10^{-13}$. The phases of an eigenfunction $u$ is computed according to the formula 
\begin{equation}
\label{R:C0phasemodulopi}
\phi(x_{1},x_{2}):=\arcsin \left(\frac{\Im\big(u(x_{1},x_{2})\big)}{|u(x_{1},x_{2})|}\right) \ .
\end{equation}
On the logarithm scale of the modulus we have shown in dash line the set $\Upsilon$ where the potential $V_{\B, \, \tau}$ vanishes. 

\newpage

\begin{figure}[h!]
\hspace{-0.2cm}
\begin{tabular}{cccccc}
\includegraphics[keepaspectratio=true,width=2.0cm]{./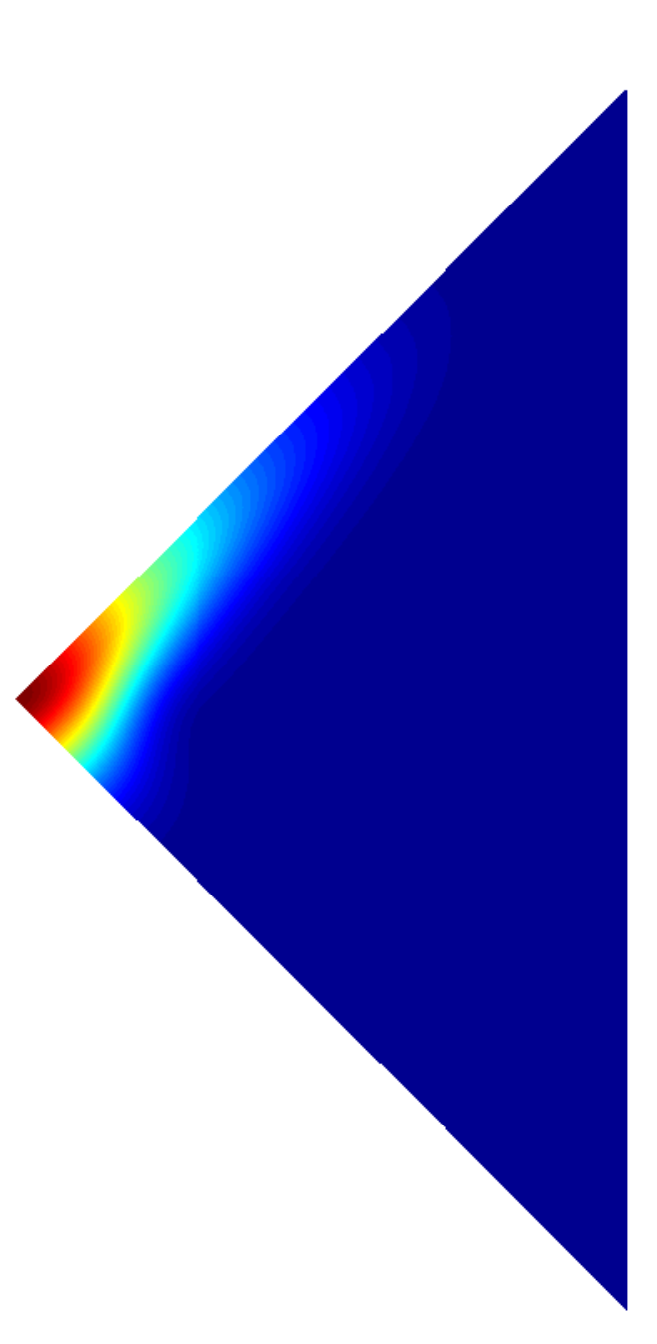}
&
\includegraphics[keepaspectratio=true,width=2.0cm]{./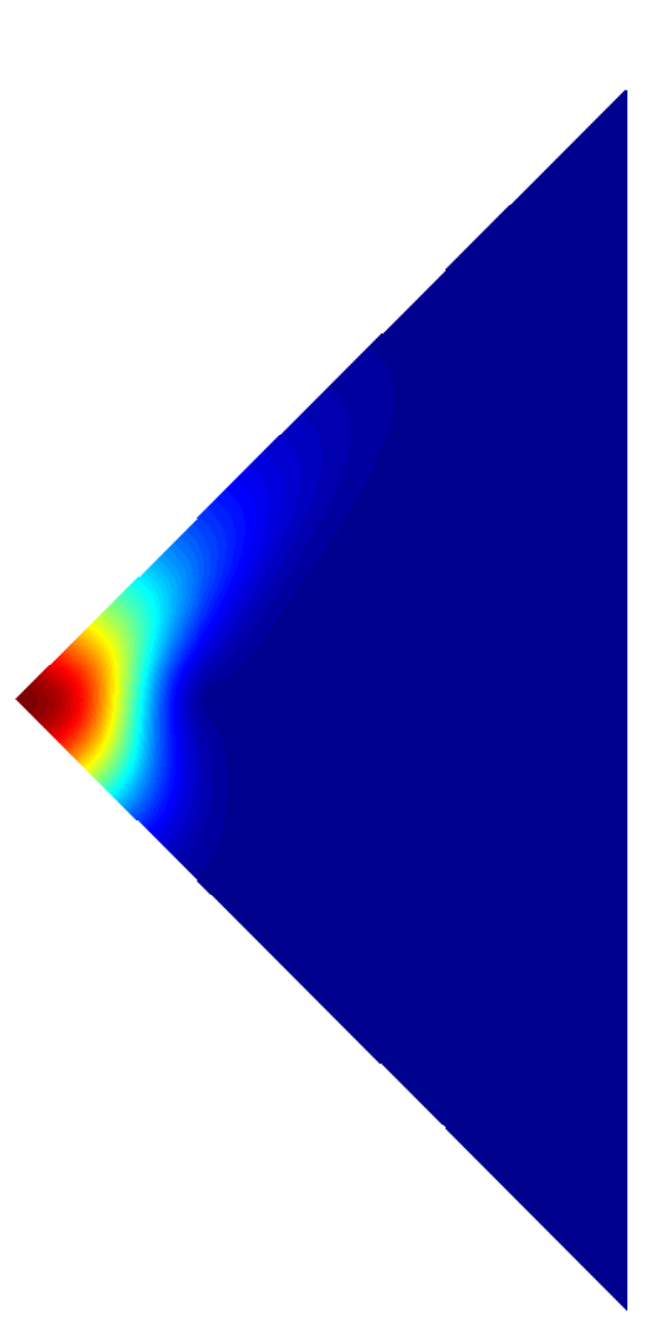}
  &
\includegraphics[keepaspectratio=true,width=2.0cm]{./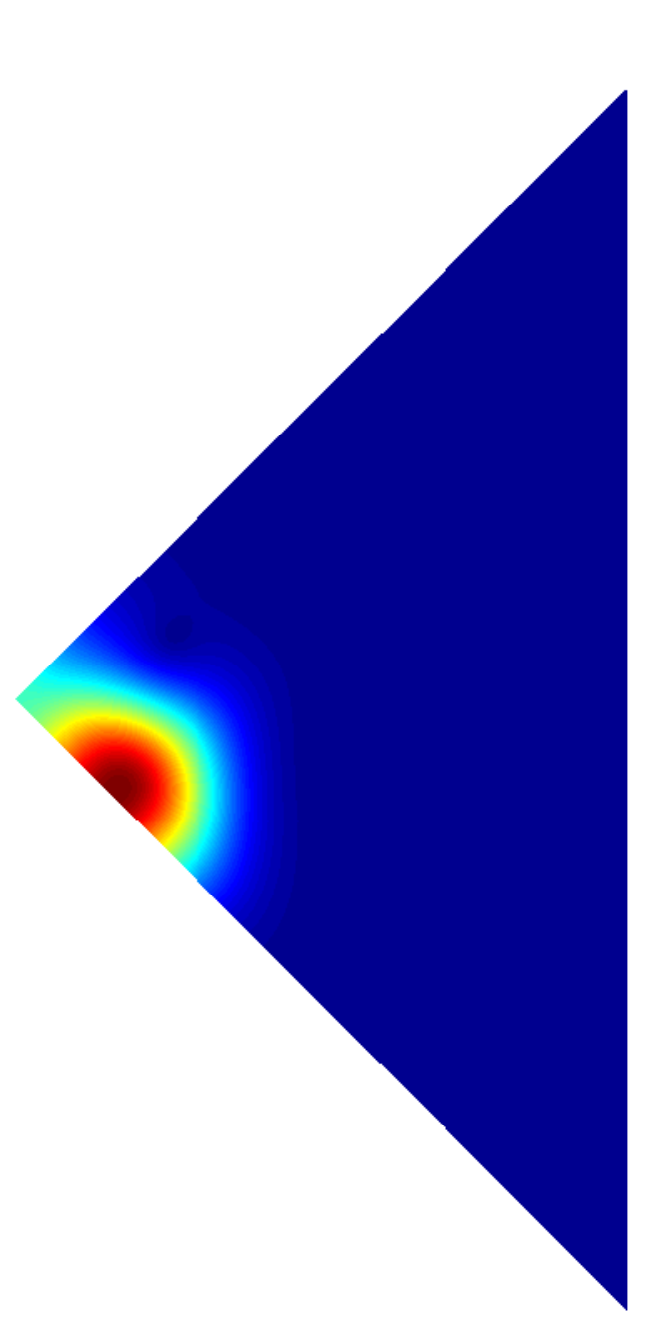}  
&
\includegraphics[keepaspectratio=true,width=2.0cm]{./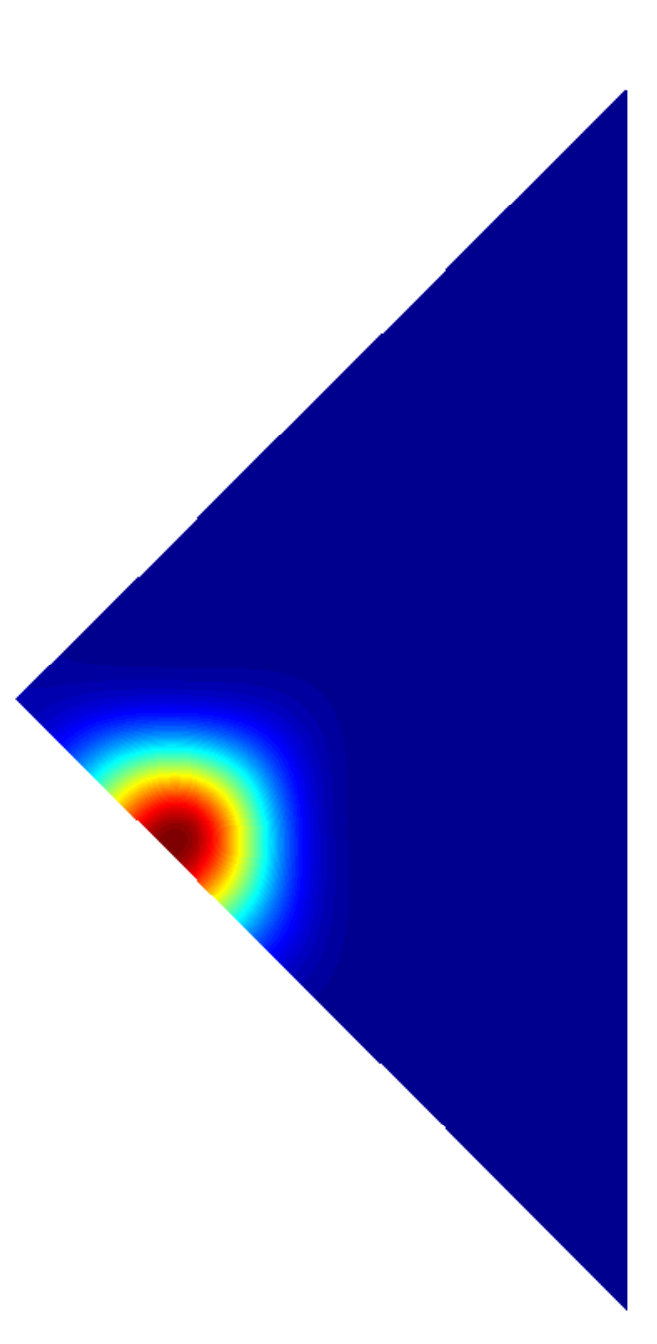}  
&
\includegraphics[keepaspectratio=true,width=2.0cm]{./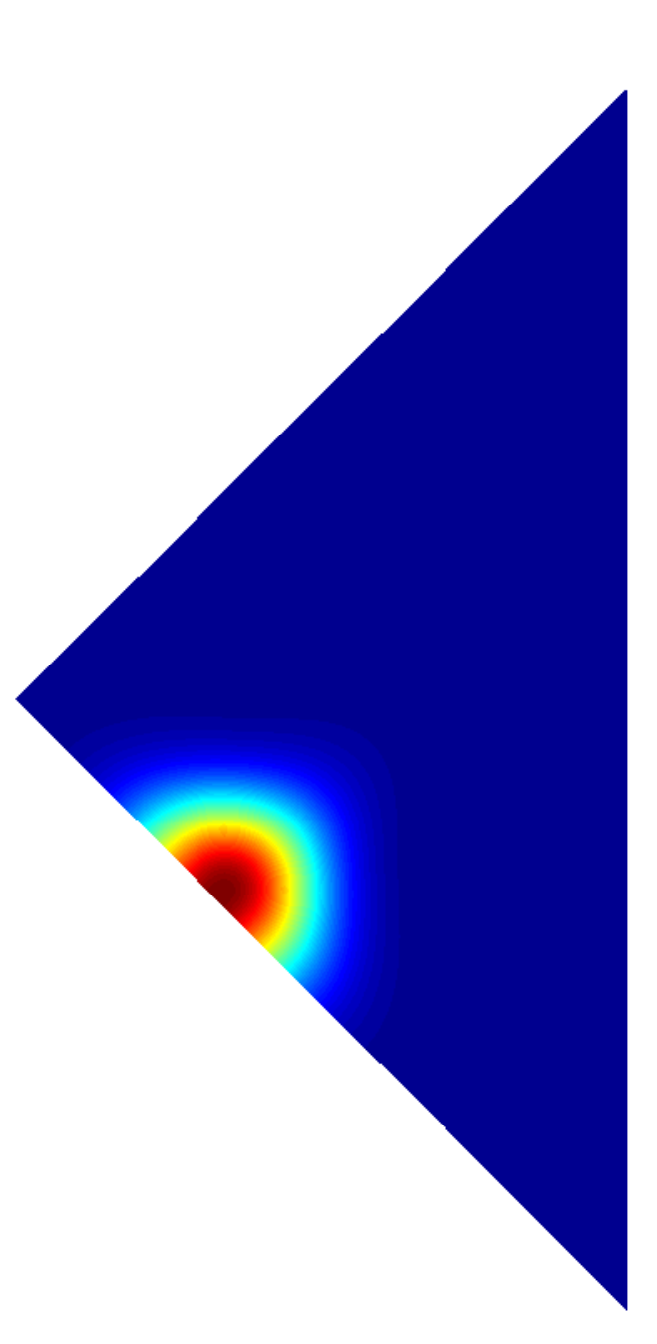}  
&
\includegraphics[keepaspectratio=true,width=0.61cm]{./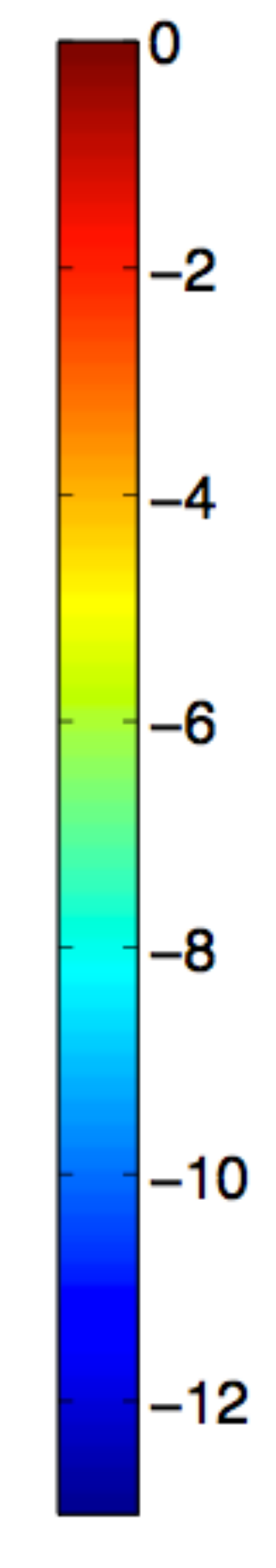} 
\\
\includegraphics[keepaspectratio=true,width=2.0cm]{./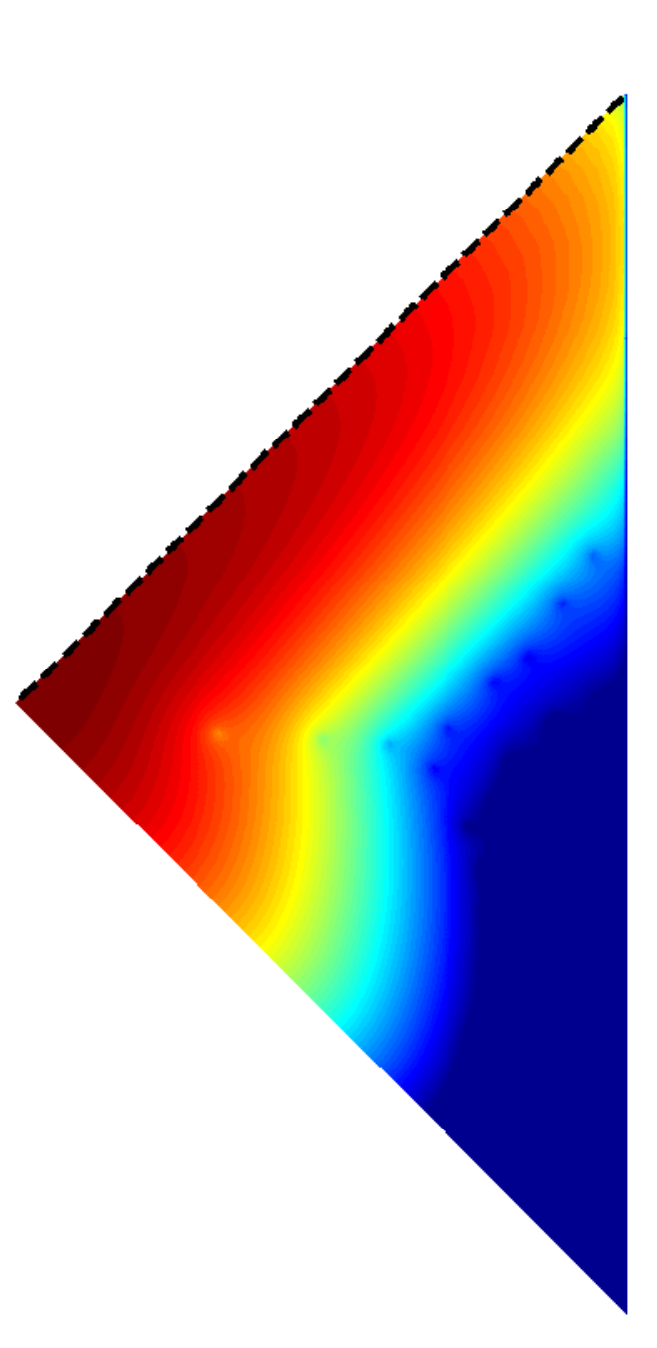}
&
\includegraphics[keepaspectratio=true,width=2.0cm]{./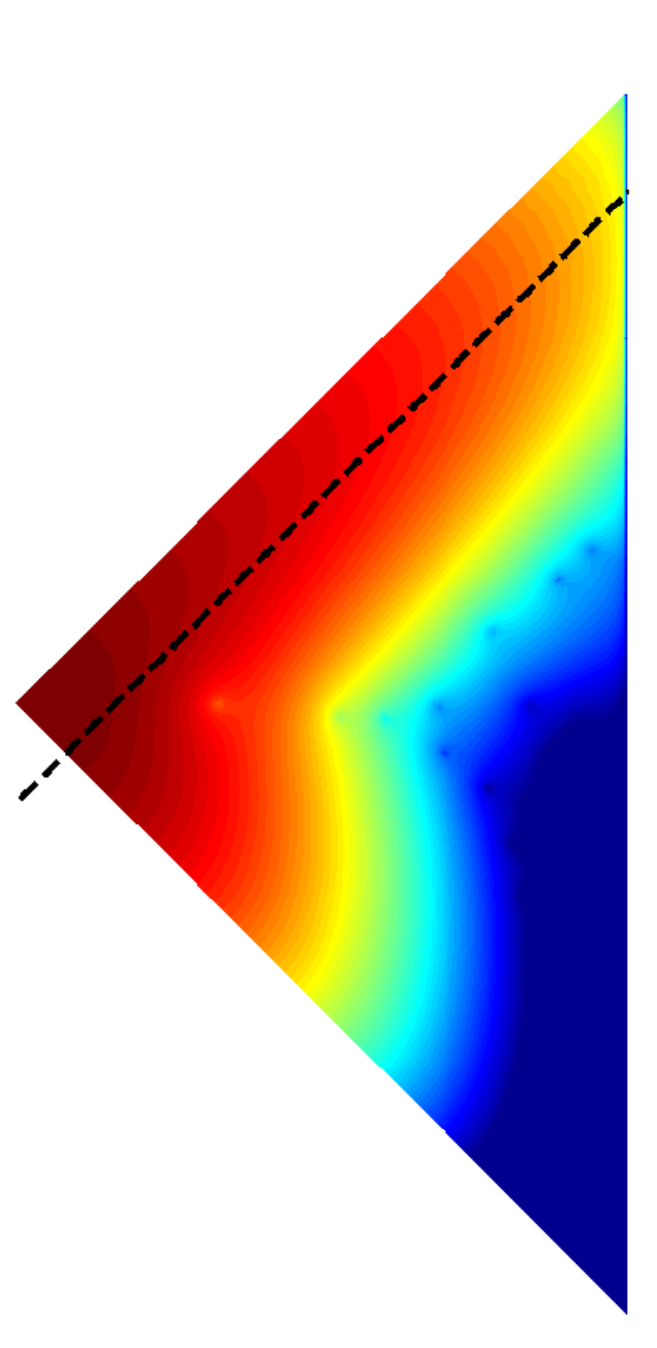}
  &
\includegraphics[keepaspectratio=true,width=2.0cm]{./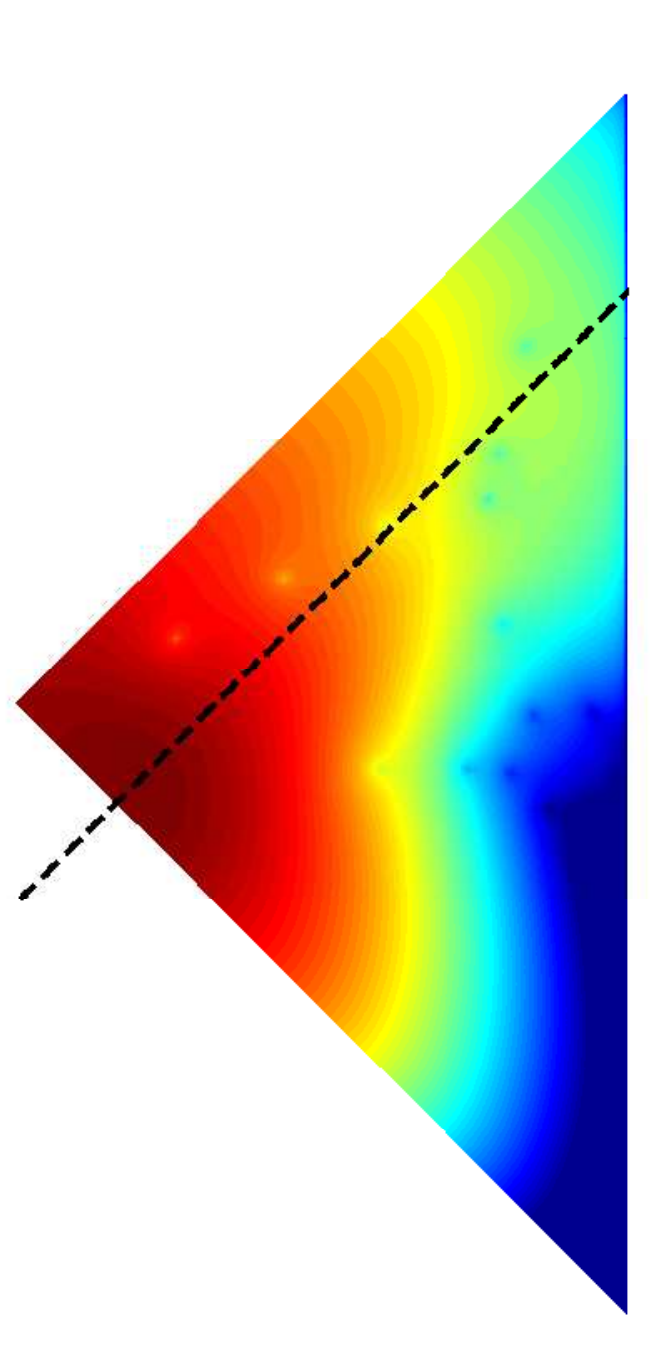}  
&
\includegraphics[keepaspectratio=true,width=2.0cm]{./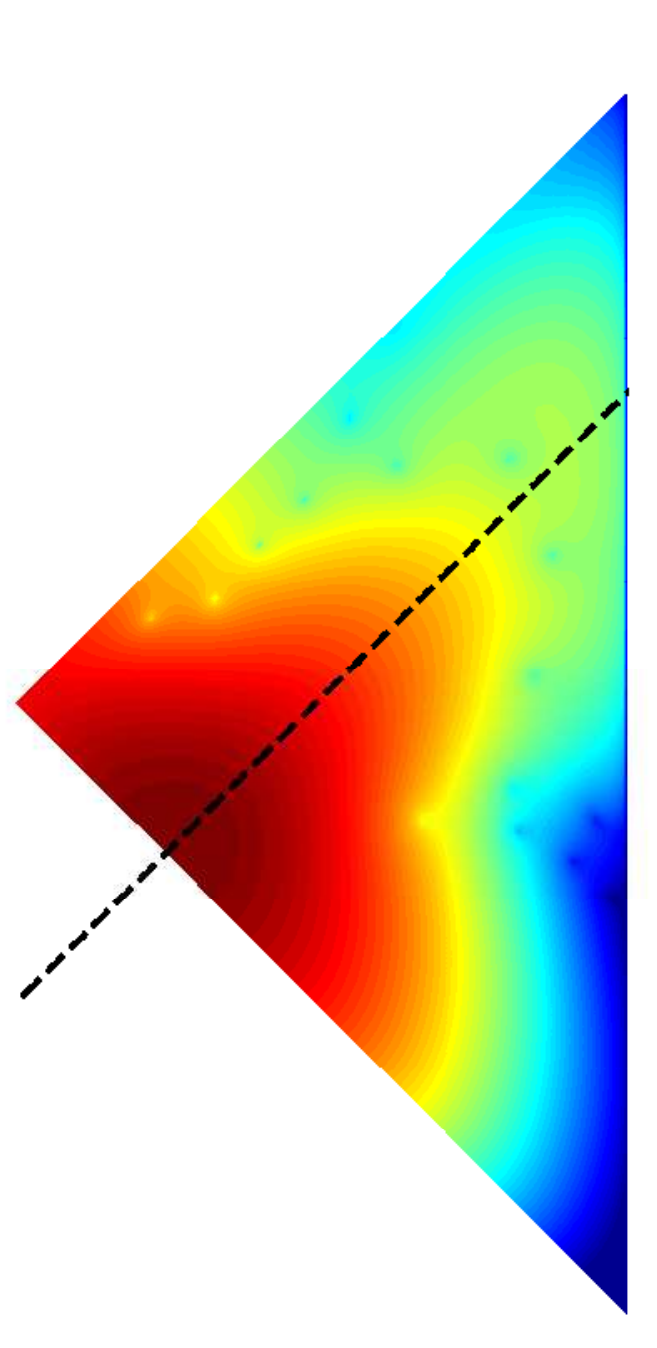}  
&
\includegraphics[keepaspectratio=true,width=2.0cm]{./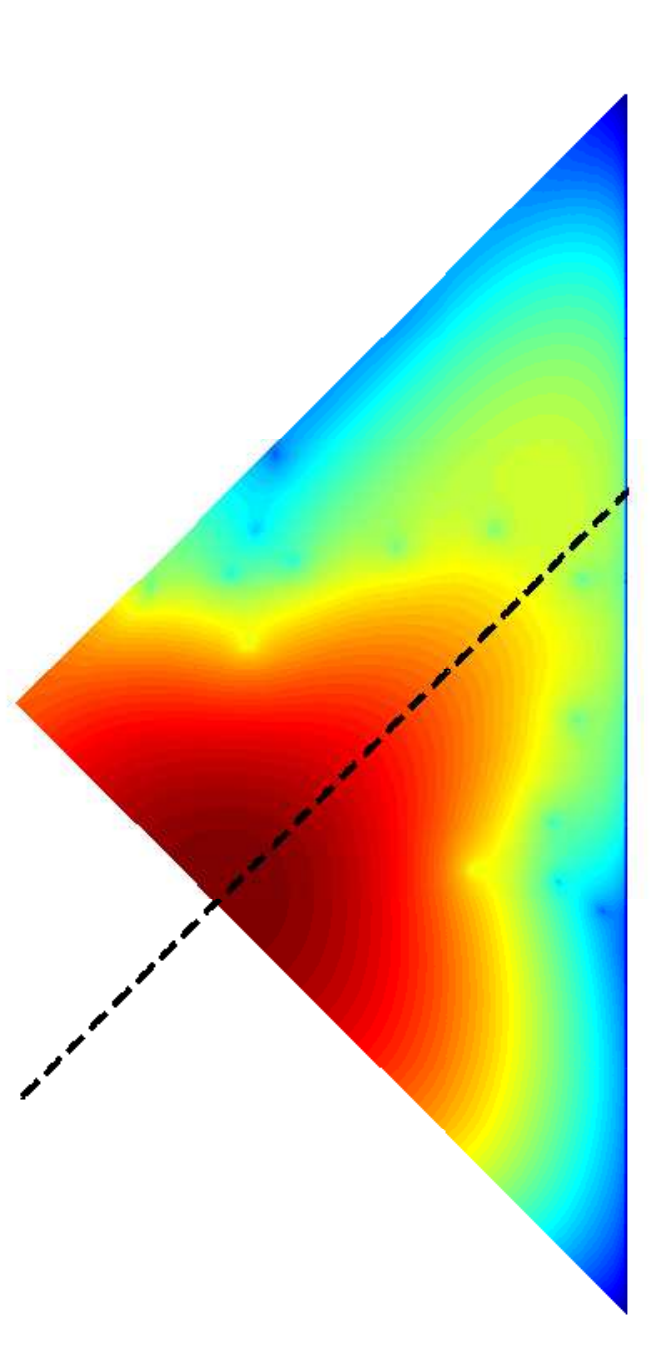} 
&
\includegraphics[keepaspectratio=true,width=0.72cm]{./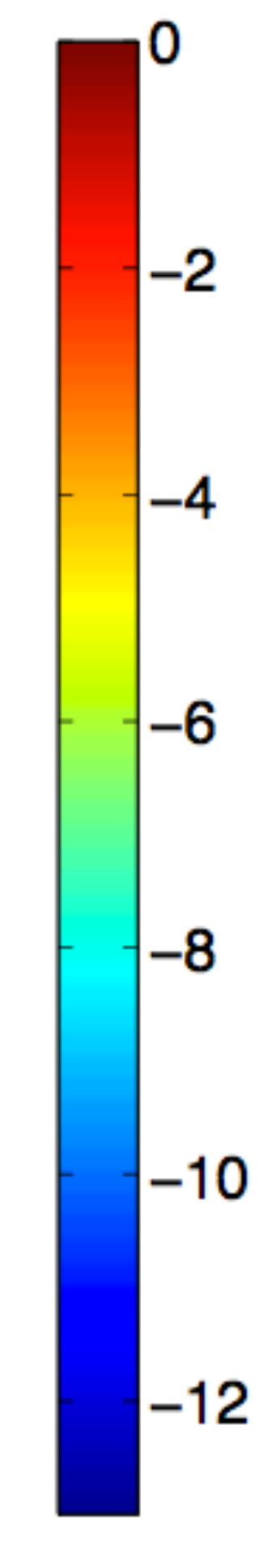}
\\
\includegraphics[keepaspectratio=true,width=2.0cm]{./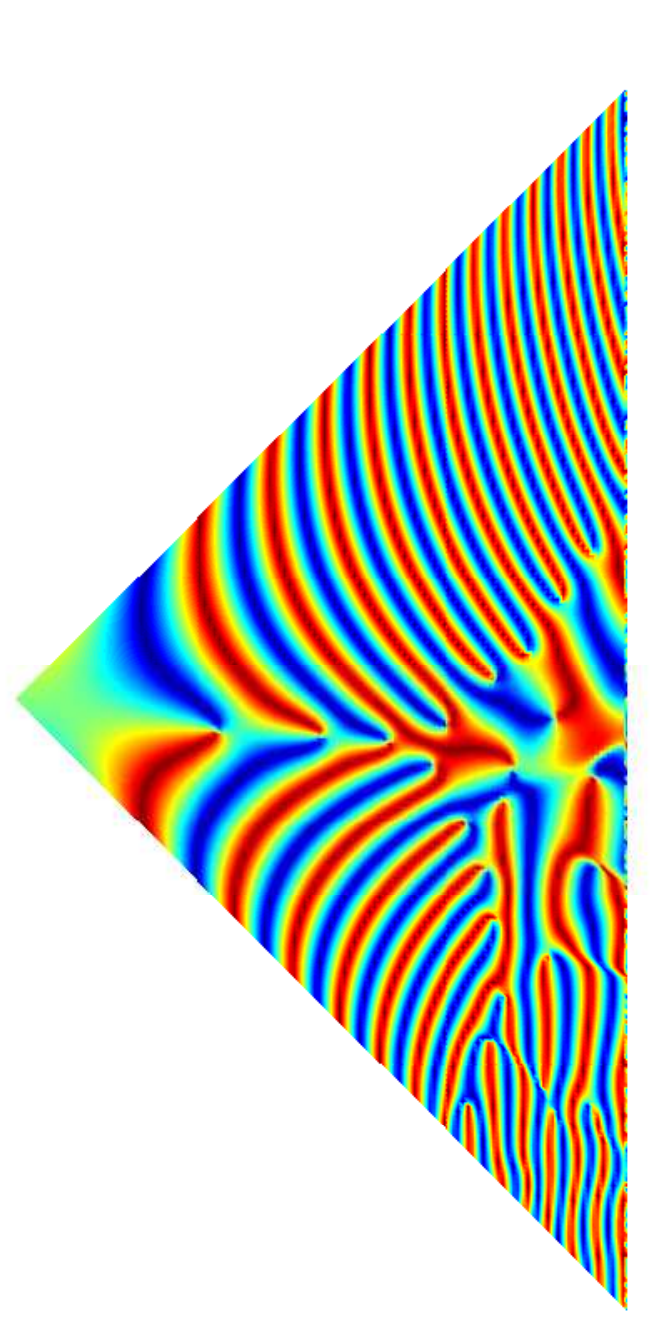}
&
\includegraphics[keepaspectratio=true,width=2.0cm]{./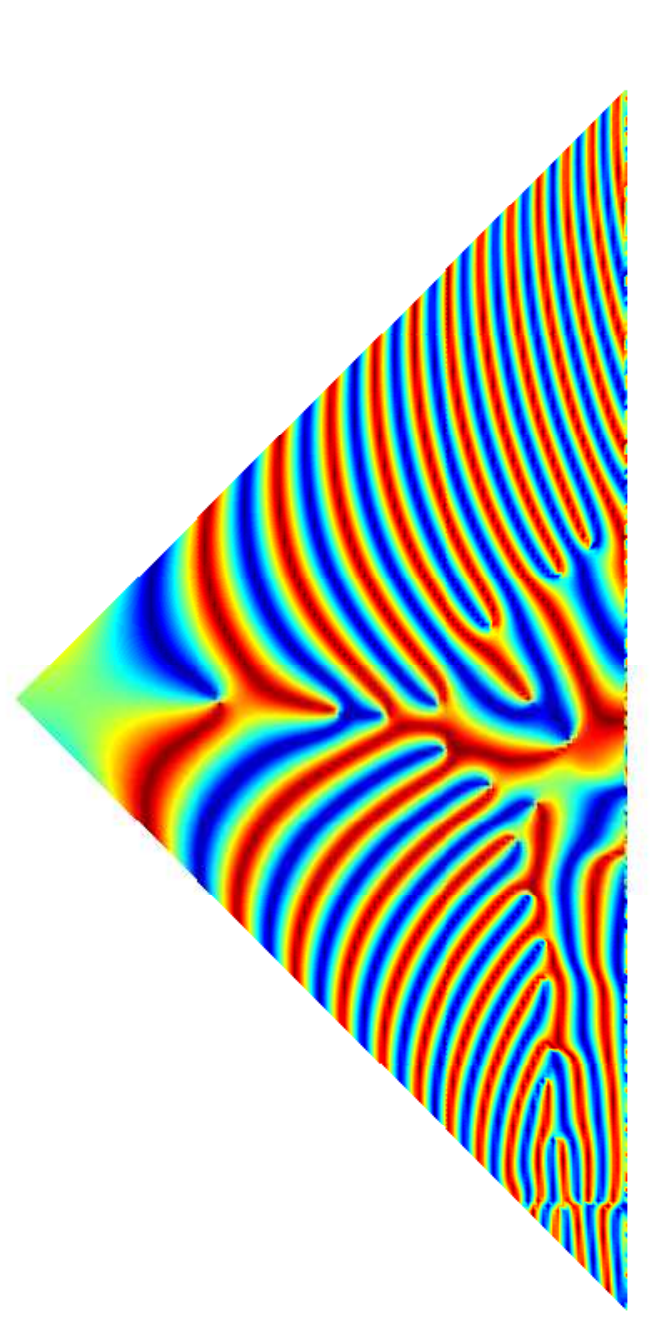}
  &
\includegraphics[keepaspectratio=true,width=2.0cm]{./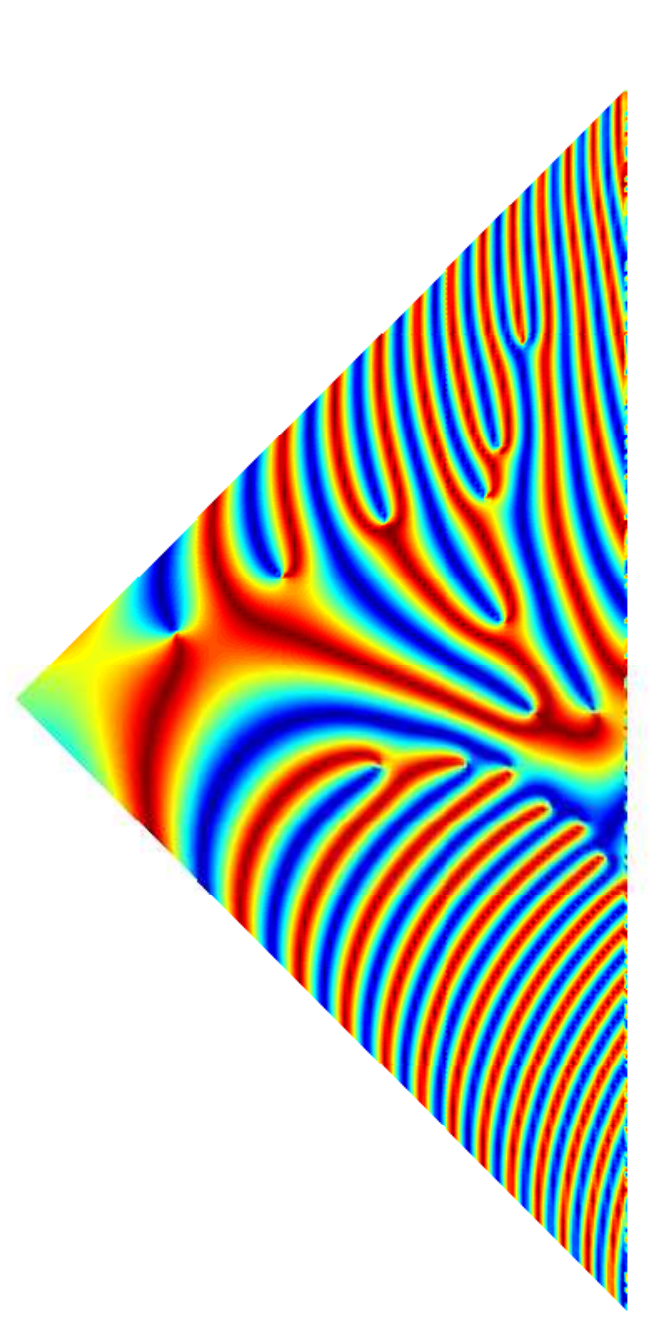}  
&
\includegraphics[keepaspectratio=true,width=2.0cm]{./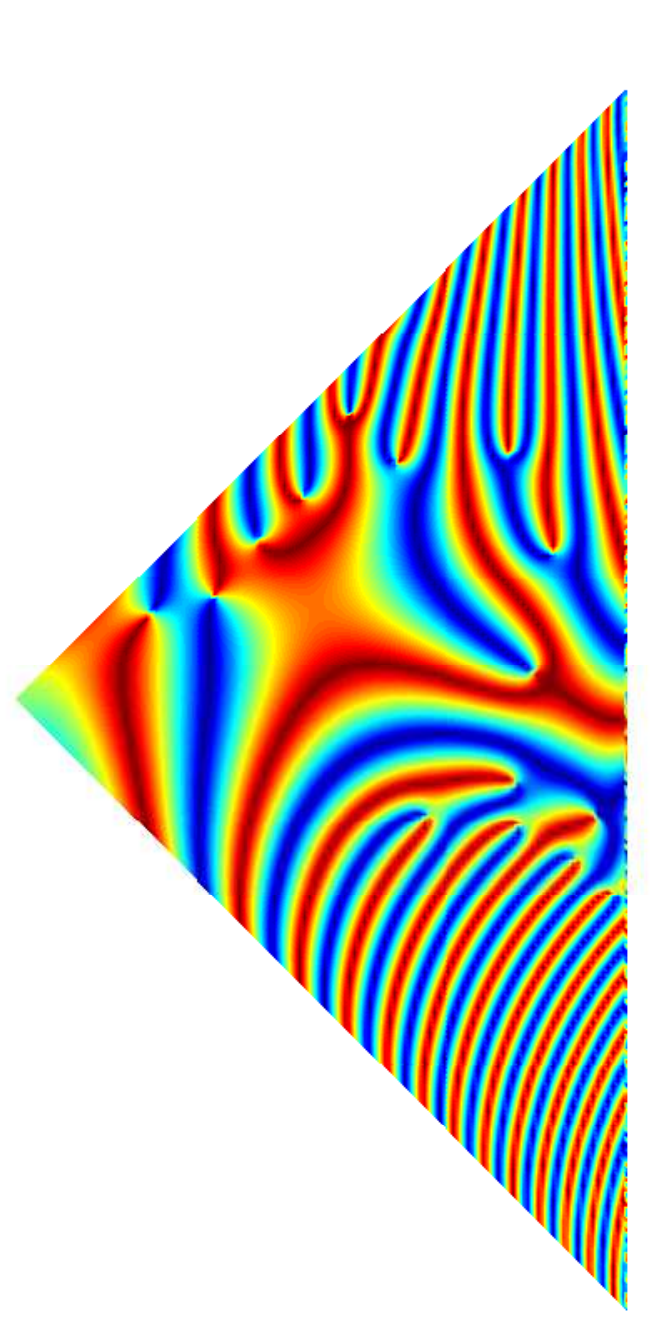}  
&
\includegraphics[keepaspectratio=true,width=2.0cm]{./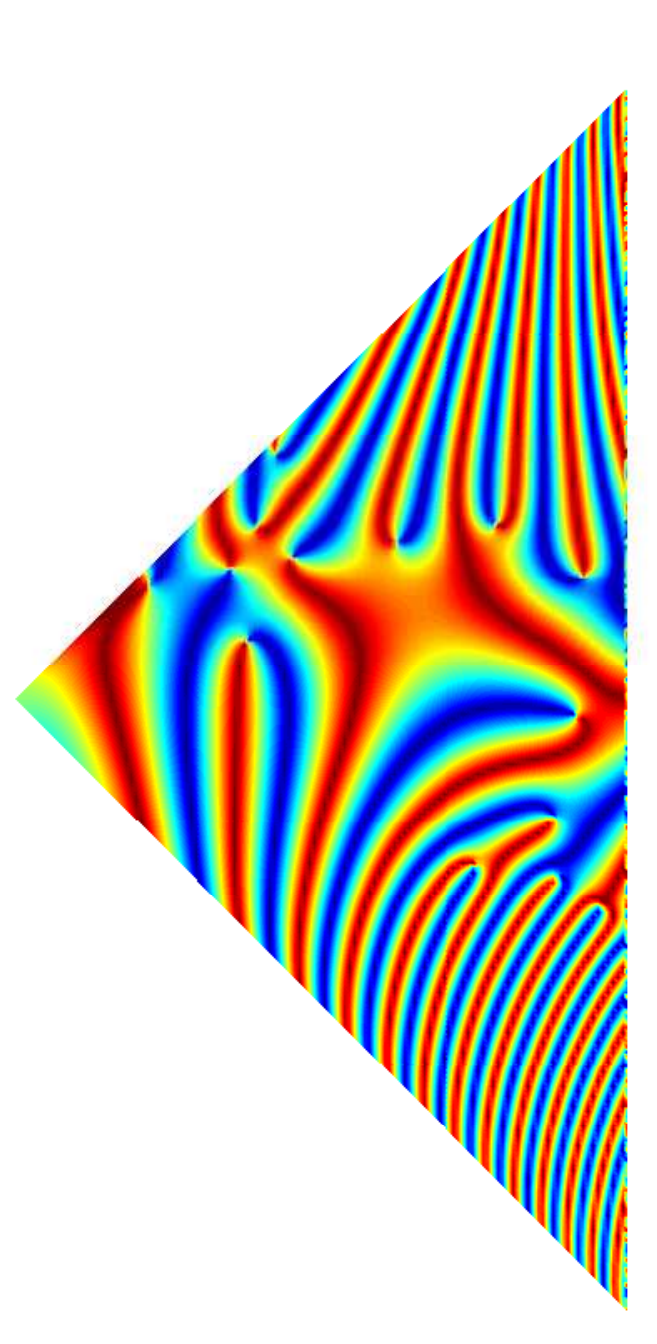} 
&
\includegraphics[keepaspectratio=true,width=0.60cm]{./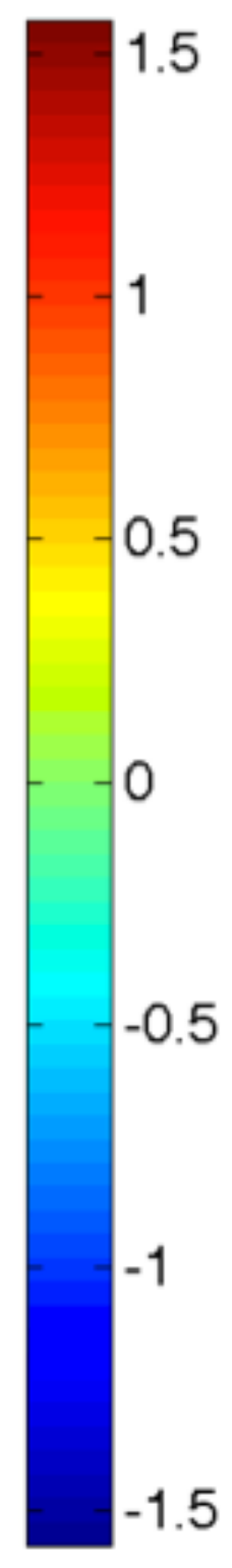}
\\
$\tau=0$ & $\tau=0.5$ & $\tau=1$ & $\tau=1.5$ & $\tau=2$
\end{tabular}
\caption{Opening angle: $\alpha=\frac{\pi}{2}$. Spherical coordinates of ${\bf B}$: $(\gamma,\theta)=(\frac{\pi}{10},\frac{\pi}{4})$. From top to bottom: the modulus, the base-10 logarithm of the modulus and the phases modulo $\pi$ of the eigenfunction associated to $\breve{\sse}({\bf B};\alpha,\tau)$ for $\tau=\frac{k}{2}$, $0\leq k \leq 4$. In dash line : the set $\Upsilon$. Computational domain: $\mathcal{T}_{\frac{\pi}{2},14}$. Magnetic potential: $\Add^{\rm R}$. 
}
\label{F2}
\end{figure}

\newpage
On figure \ref{F4} we take a magnetic field of spherical coordinates $(\gamma,\theta)=(\frac{\pi}{2},\frac{\pi-\alpha}{2})$. The magnetic field is tangent to a face and normal to the edge. For each value of $\alpha$ we make several computations of $\breve{\sse}({\bf B};\alpha,\tau)$ and we define 
$$ \breve{\spectre}({\bf B};\Wedge_{\alpha}):=\inf_{\tau}\breve{\sse}({\bf B};\alpha,\tau)$$ a numerical approximation of $\spectre({\bf B};\Wedge_{\alpha})$. We have plotted $\breve{\spectre}({\bf B};\Wedge_{\alpha})$ for $\alpha=k\frac{\pi}{20}$ with $1 \leq k \leq 19$. We have also plotted the constant $\Theta_{0}$ and the upper bound from Proposition \ref{E:lowerbound}.

We observe that the $\alpha\mapsto\breve{\spectre}({\bf B};\Wedge_{\alpha})$ is non decreasing on $(0,\frac{\pi}{2}]$ and close to $\Theta_{0}$ for $\alpha\in[\frac{\pi}{2},\pi]$, according to Proposition \ref{P:monotonie} and Theorem \ref{T:egalite}. Moreover $\breve{\spectre}({\bf B};\Wedge_{\alpha})$ seems to go to 0 when $\alpha$ goes to 0, according with the results from Section \ref{S:ub}.

\begin{figure}[ht]
\begin{center}
\includegraphics[keepaspectratio=true,width=16cm]{./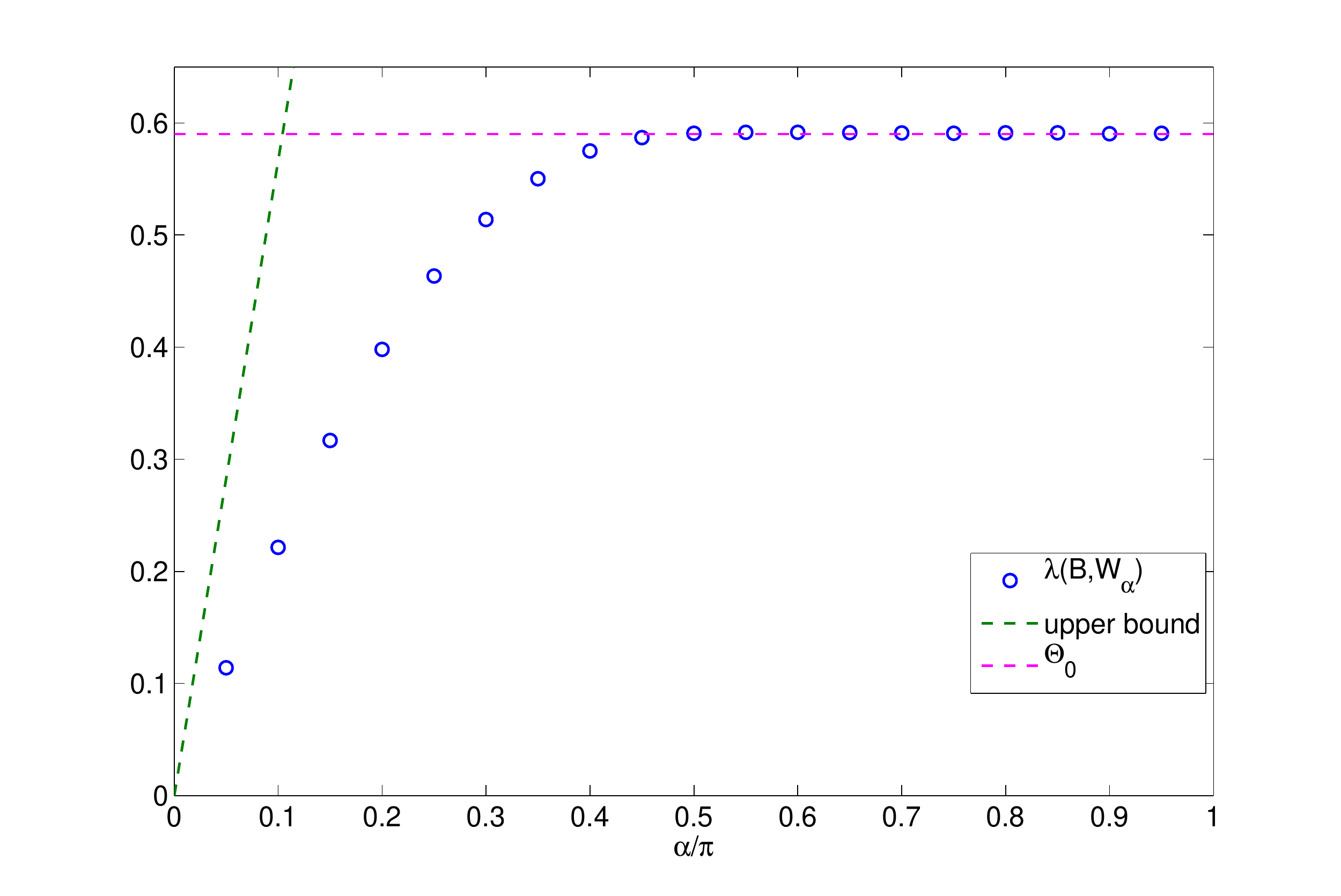}
\caption{Spherical coordinates of ${\bf B}$: $(\gamma,\theta)=(\frac{\pi}{2},\frac{\pi-\alpha}{2})$. The approximation $\breve{\sd}({\bf B};\Wedge_{\alpha})$ with respect to $\vartheta:=\frac{\alpha}{\pi}$ for $\vartheta=\frac{k}{20}$, $1\leq k \leq 19$ compared to $\Theta_{0}$ and to the upper bound from Proposition \ref{E:lowerbound}.}
\label{F4}
\end{center}
\end{figure}
\paragraph{Acknowledgements} The author would like to thank the Mittag-Leffler Institute where this articles has partially been written. The author is also grateful to V. Bonnaillie-No\"el and M. Dauge for their advices and their interest in this work.
\newpage
\bibliographystyle{mnachrn}

\end{document}